\DeclareMathOperator{\Spec}{Spec}
\DeclareMathOperator{\CH}{CH}
\DeclareMathOperator{\fieldchar}{char}
\DeclareMathOperator{\id}{id}
\DeclareMathOperator{\Ann}{Ann}
\newcommand{\mg}[1]{\mathcal{M}_{#1}}
\newcommand{\mgbar}[1]{\overline{\mathcal{M}}_{#1}}
\newcommand{\mgrtilde}[2]{\widetilde{\mathcal{M}}_{#1}^{#2}}
\newcommand{\cgbar}[1]{\overline{\mathcal{C}}_{#1}}
\newcommand{\inv}{^{-1}}
\newtheorem{theorem}{Theorem}[section]
\newtheorem{lemma}[theorem]{Lemma}
\newtheorem{proposition}[theorem]{Proposition}
\newtheorem*{theorem*}{Theorem}
\theoremstyle{definition}
\newtheorem{definition}[theorem]{Definition}
\newtheorem{note}[theorem]{Note}
\title{The integral Chow ring of $\mgbar{1,3}$}
\author{Martin Bishop}
\begin{document}

\maketitle
\begin{abstract}
In this paper we compute the integral Chow ring of the moduli space of stable elliptic curves
with three marked points by combining several patching techniques, including higher Chow groups
with $\ell$-adic coefficients.
\end{abstract}

\section{Introduction}

One of the central issues of the computation of Chow rings is solving the \textit{patching problem}:
for a closed substack $Z\rightarrow X$ with complement $U$, how can one compute the Chow ring
of $X$ given the Chow rings of $U$ and $Z$? These rings fall into an exact sequence
$$
\CH(Z)\rightarrow\CH(X)\rightarrow\CH(U)\rightarrow0,
$$
but unfortunately the failure of left exactness, among many other things, makes this computation
highly non-trivial.

In \cite{Bl86}, Bloch defined \textit{higher Chow groups}, which allow one to extend this into
a long exact sequence
\begin{align*}
\dots\rightarrow\CH(Z,1)\rightarrow\CH(X,1)\rightarrow\CH(U,1)
\rightarrow\\
\CH(Z)\rightarrow\CH(X)\rightarrow\CH(U)\rightarrow0.
\end{align*}
Unfortunately still, these higher Chow groups are extremely difficult to compute and have
complicated behavior. For
instance, $\CH^1(\Spec \mathbbm k,1)=\mathbbm k^*$.
If instead we consider \textit{higher Chow groups with $\ell$-adic coefficients}, as defined in
\cite{Lar21}, we have $\CH(\Spec\mathbbm k,1;\mathbb Z_{\ell})=0$ for $\ell$ coprime to
$\fieldchar\mathbbm k$. This allows one to make computations for the non-torsion
and $\ell$-torsion parts of Chow rings.

One other philosophy is to consider enlargements of the desired stack.
As observed in \cite[the \textit{patching lemma}]{DLV21},
if the top Chern class of the normal bundle of $Z$ in $X$ is not
a zero-divisor, then the Chow ring of $X$ is exhibited as a fiber product of rings.
However, in classical intersection theory \textit{every} (degree greater than zero)
element of a Chow ring is always
a zero-divisor, since Chow rings terminate in degree equal to the dimension of the scheme.
The Chow rings of Deligne-Mumford stacks are torsion in degree higher than the dimension (since
$\CH(\mathcal X)_{\mathbb Q}\cong\CH(X)_{\mathbb Q}$ where $X$ is a coarse moduli space), and so all
(degree greater than zero) elements
are zero-divisors. Hence to satisfy this property one must enlarge the original stack to possess some
undesirable properties \textit{as a stack} so that its Chow ring can possess desirable properties.

This philosophy has been put to use in a handful of papers (\cite{Bis23, DLPV21, Per23IV})
by enlarging
the moduli stack of stable curves to include worse singularities ($A_r$ singularities).
This creates non-separated stacks whose Chow rings are better behaved, where one can
perform patching for the larger stacks and then excise out the undesired curves afterwards.

In this paper we combine the methods of higher Chow groups with $\ell$-adic coefficients with
the philosophy of enlarging stacks. In particular, we allow $A_2$ (cuspidal) singularities
\textit{and} we allow, in select circumstances, marked points to lie in the singular locus.
By carefully picking this particular modification of $\mgbar{1,3}$, we ensure that
at each step of the patching process we have that each first higher Chow group with $\ell$-adic
coefficients is zero.
We also use a variation of the patching lemma which allows us to compute Chow
rings even when the top Chern class of the normal bundle of $Z$ is a zero-divisor.
From this we get

\begin{theorem*}\label{m13bar Chow ring}
The Chow ring of $\mgbar{1,3}$ is
generated by $\lambda_1,\delta_{12},\delta_{13},\delta_{23},\delta_3$,
and the ideal of relations is generated by the following elements:
$$
\begin{aligned}
&24\lambda_1^2,\\
&\delta_{12}(\delta_{12}+\delta_3+\lambda_1),\quad
\delta_{13}(\delta_{13}+\delta_3+\lambda_1),\quad
\delta_{23}(\delta_{23}+\delta_3+\lambda_1),\\
&\delta_3(\delta_3+\delta_{12}+\lambda_1),\\
&\delta_3(\delta_{12}-\delta_{13}),\quad
\delta_3(\delta_{12}-\delta_{23}),\\
&\delta_{12}\delta_{13},\quad
\delta_{12}\delta_{23},\quad
\delta_{13}\delta_{23},\\
&12\lambda_1^2(\lambda_1+\delta_{12}+\delta_{13}-\delta_{23}+\delta_3),
\end{aligned}
$$
and
$$
\CH(\mgbar{1,3},1;\mathbb Z_{\ell})=0.
$$
\end{theorem*}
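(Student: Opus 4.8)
The plan is to exhibit $\mgbar{1,3}$ as an open substack of an enlargement $\mgtilde{1,3}$ --- the moduli stack of arithmetic-genus-one curves with three marked points in which one also allows $A_2$ (cuspidal) singularities and, in the controlled situations flagged in the introduction, marked points sitting at the singular locus --- then compute $\CH(\mgtilde{1,3})$ together with its first higher Chow group by an inductive patching argument along a stratification, and finally excise the locus of curves that do not occur in $\mgbar{1,3}$. The reason for working with this particular, somewhat unnatural, enlargement is that it can be arranged so that every stratum, and every open union of strata that appears in the induction, is a quotient $[V/G]$ with $G$ a special group (built from tori and additive groups) and $V$ an open in a representation; for such stacks the Chow ring is tractable and, in the $\mathbb{Z}_\ell$-adic formalism of \cite{Lar21}, the first higher Chow group vanishes.

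Concretely, I would first stratify $\mgtilde{1,3}$ by topological type: the number and analytic type (node or cusp) of the singular points, the distribution of the marked points among the components, and whether a marked point lies at a singularity. The closed strata are (gerbe-twisted versions of) products of pieces like $\mgtilde{1,1}$, $\mgbar{0,k}$, and classifying stacks, whose Chow rings are known; along the way I would record the relevant normal bundles and their top Chern classes. Then, for each closed immersion $Z\hookrightarrow X$ with open complement $U$ in the filtration, I would run Bloch's localization sequence (\cite{Bl86}) with $\mathbb{Z}_\ell$-coefficients, observe that the boundary map $\CH(U,1;\mathbb{Z}_\ell)\to\CH(Z;\mathbb{Z}_\ell)$ vanishes since $\CH(U,1;\mathbb{Z}_\ell)=0$ by induction, and thereby get a short exact sequence $0\to\CH(Z;\mathbb{Z}_\ell)\to\CH(X;\mathbb{Z}_\ell)\to\CH(U;\mathbb{Z}_\ell)\to 0$ together with $\CH(X,1;\mathbb{Z}_\ell)=0$; doing this for all $\ell$ coprime to the characteristic and combining with the rational picture gives the integral additive statement. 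To promote this to a ring statement I would apply the patching lemma of \cite{DLV21} at the steps where the top Chern class of the normal bundle of $Z$ is a non-zero-divisor, and the variant announced in the introduction at the steps where it is a zero-divisor; iterating up the stratification yields a presentation of $\CH(\mgtilde{1,3})$ and the vanishing of $\CH(\mgtilde{1,3},1;\mathbb{Z}_\ell)$.

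Finally, writing $Z_{\mathrm{bad}}\subset\mgtilde{1,3}$ for the closed substack of curves with a cusp or with a marked point at a singular point, so that $\mgbar{1,3}=\mgtilde{1,3}\setminus Z_{\mathrm{bad}}$, I would compute $\CH(Z_{\mathrm{bad}})$ and the image of the Gysin pushforward $\CH(Z_{\mathrm{bad}})\to\CH(\mgtilde{1,3})$, and read off $\CH(\mgbar{1,3})$ as the quotient by that ideal. The asserted vanishing $\CH(\mgbar{1,3},1;\mathbb{Z}_\ell)=0$ then drops out of the same long exact sequence, since $\CH(\mgtilde{1,3},1;\mathbb{Z}_\ell)=0$ and $\CH(Z_{\mathrm{bad}},1;\mathbb{Z}_\ell)=0$ (which follows by the same stratification argument applied to $Z_{\mathrm{bad}}$). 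Renaming the surviving generators as $\lambda_1,\delta_{12},\delta_{13},\delta_{23},\delta_3$ --- the first pulled back from $\mgtilde{1,1}$, the rest the boundary classes that survive the excision --- and rewriting the relations in these terms is then a finite, if lengthy, computation.

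I expect the two genuine difficulties to be the following. First, choosing the enlargement: the whole strategy works only if \emph{every} stratum occurring in the induction has vanishing first higher $\ell$-adic Chow group, and it is precisely to force this --- while keeping $Z_{\mathrm{bad}}$ under control --- that one is driven to the delicate compromise of allowing cusps and allowing marked points on singularities only in ``select circumstances''. Second, tracking the multiplicative structure through the several zero-divisor patching steps and through the final excision: the additive bookkeeping pins down the groups but not the ring, and establishing the more exotic relations, in particular $12\lambda_1^2(\lambda_1+\delta_{12}+\delta_{13}-\delta_{23}+\delta_3)$, will require carefully propagating products (and the compatibility data of the variant patching lemma) at each stage rather than reading them off at the end.
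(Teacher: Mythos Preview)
Your overall architecture --- enlarge, stratify, patch with the $\ell$-adic higher Chow machinery, then cut down --- is the paper's strategy. But two points in your plan diverge from what actually makes the argument go through.

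First, the enlargement in the paper is \emph{not} one in which $\mgbar{1,3}$ sits as an open. The paper's $\mathcal X$ deliberately \emph{removes} the banana curves (the locus $\mathcal X_{\alpha.1}$), so that one has $\mgbar{1,3}=(\mathcal X\setminus\mathcal X^{\text{cusp}})\cup\mathcal X_{\alpha.1}$. After excising the cuspidal locus one must \emph{patch $\mathcal X_{\alpha.1}$ back in}, and it is precisely this final patching step --- using that $\CH(\mathcal X_{\alpha.1})=\mathbb Z[\lambda_1,x]/(2\lambda_1,x(x+\lambda_1))$, that the pushforward is injective (from vanishing of the first higher Chow group of the complement), and a WDVV-type identity $4\delta_{\alpha.1}=24\lambda_1(\delta_{12}+\delta_{13}-\delta_{23}+\delta_3)$ coming from $\mgbar{0,5}\to\mgbar{1,3}$ --- that produces the eleventh relation $12\lambda_1^2(\lambda_1+\delta_{12}+\delta_{13}-\delta_{23}+\delta_3)$. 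Your scheme of ``excise $Z_{\mathrm{bad}}$'' never introduces $\delta_{\alpha.1}$ as a class to be analyzed and gives no mechanism for discovering this relation; if instead you keep banana curves in the enlargement, you will find $c_2(\mathcal N_{\mathcal X_{\alpha.1}})=0$, so the patching lemma yields nothing there and the induction stalls. The stratification is also finer than ``topological type'': on the irreducible locus one must separate the sublocus where $p_2,p_3$ are elliptic involutions of each other (this is what makes each piece a clean $\mathbb G_m$-quotient).

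Second, your deduction of $\CH(\mgbar{1,3},1;\mathbb Z_\ell)=0$ is backwards. For a closed $Z\hookrightarrow X$ with complement $U$, the localization sequence reads
\[
\cdots\to\CH(X,1;\mathbb Z_\ell)\to\CH(U,1;\mathbb Z_\ell)\to\CH(Z;\mathbb Z_\ell)\to\CH(X;\mathbb Z_\ell)\to\cdots,
\]
so vanishing of $\CH(X,1;\mathbb Z_\ell)$ and $\CH(Z,1;\mathbb Z_\ell)$ does \emph{not} force $\CH(U,1;\mathbb Z_\ell)=0$; what you need is vanishing of $\CH(X,1;\mathbb Z_\ell)$ together with \emph{injectivity} of the pushforward $\CH(Z)\to\CH(X)$. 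The paper arranges exactly this (e.g.\ the cuspidal pushforward is multiplication by $24\lambda_1^2$, which is injective on $\CH(\mathcal X)$), and that injectivity is itself one of the nontrivial outputs of the computation.
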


\subsection{Assumptions and notation}
We work over a fixed field $\mathbbm k$ of characteristic not equal to 2 or 3. We will always
assume that $\ell$ is coprime to $\fieldchar\mathbbm k$. When we use subscripts on $\mathbb A^n$
we are sometimes indicating the coordinates and sometimes the weight of a $\mathbb G_m$ action --
e.g. if $\mathbb G_m$ acts on $x,y$ with weights $-2,-3$, respectively, we might write
$\mathbb A^2_{x,y}$ or $\mathbb A^2_{-2,-3}$ depending on which aspect we are emphasizing.

\subsection{Acknowledgements}
I would like to thank Jarod Alper, Max Lieblich, Andrea Di Lorenzo (who, along with
Luca Battistella, independently computed the Chow ring of $\mgbar{1,3}$ and $\mgbar{1,4}$ in
\cite{BDL24}), Will Newman, Yuchen Liu, and Felix
Janda for their helpful conversations while preparing this work.

\section{Patching techniques}\label{patching techniques}
We will make use of two different patching methods: an alternate version of the
\textit{patching lemma} \cite[Lemma 3.4]{DLV21} and higher Chow groups with
$\ell$-adic coefficients \cite{Lar21}.

\begin{definition}
Define the $n^{\text{th}}$ higher Chow group with $\ell$-adic coefficients to be
$$
\CH(X,n;\mathbb Z_{\ell})=H_n\left(\lim z^*(X_{\bar{\mathbbm k}},\bullet)
\otimes^L\mathbb Z/\ell^m\mathbb Z\right).
$$
\end{definition}
In the case where each $\CH(X,n;\mathbb Z/\ell^m\mathbb Z):=H_n(z^*(X_{\bar{\mathbbm k}},\bullet)\otimes\mathbb Z/\ell^m\mathbb Z)$
is finitely generated, we have
$$
\CH(X,n;\mathbb Z_{\ell})=\lim\CH(X,n;\mathbb Z/\ell^m\mathbb Z).
$$

\begin{proposition}
If $Z\rightarrow X$ is closed with complement $U$ and:
\begin{itemize}
\item $\CH(Z)$ and $\CH(U)$ are finitely generated,
\item $\CH(Z)\rightarrow\CH(Z_{\bar{\mathbbm k}})$ is injective,
\item there exists at least one $\ell$ for which $\CH(U,1;\mathbb Z_{\ell})=0$,
\item and $\CH(U,1;\mathbb Z_l)=0$ whenever $\CH(Z)$ has $\ell$-torsion,
\end{itemize}
then the excision sequence is exact on the left.
\end{proposition}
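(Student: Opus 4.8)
The plan is to deduce left-exactness from the vanishing of the boundary map in Bloch's localization long exact sequence, first passing to $\bar{\mathbbm k}$ so that the hypothesis on $\ell$-adic higher Chow groups can be brought to bear, and then peeling off the non-torsion part and the $p$-torsion parts separately using the last two hypotheses.

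First I would write down Bloch's localization sequence
$$
\CH(U,1)\xrightarrow{\partial}\CH(Z)\to\CH(X)\to\CH(U)\to 0,
$$
so that exactness on the left is equivalent to $N:=\ker\big(\CH(Z)\to\CH(X)\big)=\im\partial$ being zero. As a subgroup of the finitely generated group $\CH(Z)$, the group $N$ is finitely generated. Base changing to $\bar{\mathbbm k}$ and using that $\CH(Z)\to\CH(Z_{\bar{\mathbbm k}})$ is injective, $N$ injects into $\CH(Z_{\bar{\mathbbm k}})$; by naturality of the localization sequence its image lies in $\bar N:=\ker\big(\CH(Z_{\bar{\mathbbm k}})\to\CH(X_{\bar{\mathbbm k}})\big)=\im\bar\partial$, where $\bar\partial$ is the boundary map for $Z_{\bar{\mathbbm k}}\hookrightarrow X_{\bar{\mathbbm k}}$. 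Hence it suffices to show $\bar N$ has no free part and, for each prime $p$, no $p$-torsion — and in fact it is enough to control the primes $p$ for which $N$ itself has $p$-torsion.

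Next I would unwind the definition of $\CH(U,1;\mathbb Z_\ell)$. The Milnor sequence for the homology of the homotopy limit forces $\lim_m\CH(U_{\bar{\mathbbm k}},1;\mathbb Z/\ell^m)=0$, and the universal coefficient sequences
$$
0\to\CH(U_{\bar{\mathbbm k}},1)\otimes\mathbb Z/\ell^m\to\CH(U_{\bar{\mathbbm k}},1;\mathbb Z/\ell^m)\to\CH(U_{\bar{\mathbbm k}},0)[\ell^m]\to 0,
$$
regarded as a short exact sequence of towers in $m$, then force the $\ell$-adic completion $\CH(U_{\bar{\mathbbm k}},1)^{\wedge}_{\ell}:=\lim_m\big(\CH(U_{\bar{\mathbbm k}},1)\otimes\mathbb Z/\ell^m\big)$ to vanish. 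Since $\bar\partial$ surjects onto $\bar N$ and since $A\mapsto A^{\wedge}_{\ell}$ sends surjections to surjections (the relevant kernel towers have surjective transition maps), $\bar N^{\wedge}_{\ell}$ is a quotient of $\CH(U_{\bar{\mathbbm k}},1)^{\wedge}_{\ell}=0$, so $\bar N^{\wedge}_{\ell}=0$. Invoking finite generation of $\CH(Z_{\bar{\mathbbm k}})$, hence of $\bar N$, this says precisely that $\bar N$ has no free part and no $\ell$-torsion for the distinguished prime $\ell$ of the third hypothesis; a fortiori the same holds for $N$, so $N$ is finite with no $\ell$-torsion.

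To conclude I would argue by contradiction: if $N\neq 0$ it has $p$-torsion for some prime $p$, necessarily $p\neq\ell$, whence $\CH(Z)\supseteq N$ has $p$-torsion and the fourth hypothesis yields $\CH(U,1;\mathbb Z_p)=0$; repeating the previous paragraph with $p$ in place of $\ell$ gives that the $p$-primary part of $\bar N$, hence of $N$, vanishes — a contradiction. Therefore $N=0$, which is exactly exactness of the excision sequence on the left. The step I expect to be the real obstacle is the comparison in the third paragraph: tracking carefully which $\lim$ and $\lim^{1}$ terms are killed in passing from the inverse limit defining $\CH(U,1;\mathbb Z_\ell)$ down to $\bar N^{\wedge}_{\ell}$, and making sure the finite generation hypotheses genuinely persist after base change to $\bar{\mathbbm k}$, so that $\bar N^{\wedge}_{\ell}=0$ really does control both the free part and the $\ell$-torsion of $\bar N$. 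If one prefers, the localization long exact sequence for $\ell$-adic higher Chow groups of \cite{Lar21}, together with the identification of $\CH(X,0;\mathbb Z_\ell)$ with the $\ell$-adic completion of $\CH(X)$, gives an alternative and arguably cleaner route to the same comparison.
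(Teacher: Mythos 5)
The paper itself gives no argument for this proposition --- it is one of the statements deferred to \cite{Bis23} --- so there is no in-paper proof to compare against; I can only assess your argument on its own terms. Your overall strategy (reduce to the vanishing of the boundary map $\partial:\CH(U,1)\to\CH(Z)$, base change to $\bar{\mathbbm k}$, extract the vanishing of the $\ell$-adic completion $\CH(U_{\bar{\mathbbm k}},1)^{\wedge}_{\ell}$ from the Milnor and universal coefficient sequences, and then handle the free part and each torsion prime separately using the third and fourth hypotheses) is the natural one and most of the steps are sound: the surjectivity of completion on the image of $\bar\partial$, the reduction of $N$ to a finitely generated subgroup of $\bar N$ via the injectivity hypothesis, and the prime-by-prime bootstrap at the end are all fine.

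The genuine gap is exactly the one you flag in passing and then proceed as if it were settled: the step ``invoking finite generation of $\CH(Z_{\bar{\mathbbm k}})$, hence of $\bar N$.'' The hypotheses give finite generation of $\CH(Z)$, not of $\CH(Z_{\bar{\mathbbm k}})$, and finite generation does not automatically persist under base change to $\bar{\mathbbm k}$. Without it, the conclusion $\bar N^{\wedge}_{\ell}=0$ says only that $\bar N$ is $\ell$-divisible, and an $\ell$-divisible group can perfectly well contain a nonzero finitely generated subgroup (e.g.\ $\mathbb Z\subset\mathbb Q$), so you cannot conclude that $N$ has no free part or no $\ell$-torsion. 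The same obstruction reappears in the ``cleaner route'' you suggest at the end: comparing $\CH(Z)$ with $\CH(Z,0;\mathbb Z_{\ell})$ via the $\ell$-adic localization sequence of \cite{Lar21} only controls the image of $N$ in $\CH(Z_{\bar{\mathbbm k}})^{\wedge}_{\ell}$, i.e.\ modulo $\bigcap_m\ell^m\CH(Z_{\bar{\mathbbm k}})$, which again need not meet a finitely generated subgroup trivially unless $\CH(Z_{\bar{\mathbbm k}})$ is $\ell$-adically separated. To close the argument you need either to add the hypothesis that $\CH(Z_{\bar{\mathbbm k}})$ is finitely generated (in every application in this paper $\CH(Z)\to\CH(Z_{\bar{\mathbbm k}})$ is in fact an isomorphism of finitely generated groups, so this is harmless), or to consult how \cite{Bis23} actually phrases and proves the statement; as written, your proof establishes the proposition only under that additional assumption.
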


\begin{proposition}
Suppose $\ell$ is coprime to $\fieldchar\mathbbm k$ (later we will always have $\ell=2$ or $3$).
Then
\begin{enumerate}[label=(\alph*)]
\item $\CH(\Spec \mathbbm k,1;\mathbb Z_l)=0$;
\item $\CH(\mathbb A^n,1;\mathbb Z_l)=0$;
\item $\CH(\mathbb P^n,1;\mathbb Z_l)=0$;
\item $\CH(B\mathbb G_m,1;\mathbb Z_l)=0$;
\item $\CH(B\mu_n,1;\mathbb Z_l)=0$.
\end{enumerate}
\end{proposition}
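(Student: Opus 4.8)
The plan is to prove (a) directly and then deduce (b)--(e) from it. The higher Chow groups with $\mathbb Z/\ell^m$-coefficients of all the spaces below are finitely generated in every bidegree, so by the remark after the definition it suffices, in each case, to produce a vanishing for every $m$ (the $\lim^1$-terms then vanish and $\CH(-,1;\mathbb Z_\ell)=\lim_m 0=0$); moreover homotopy invariance, the projective bundle formula, localization, and the approximation of a classifying stack by finite-dimensional quotients are all witnessed by morphisms or distinguished triangles of cycle complexes, hence survive $-\otimes^L\mathbb Z/\ell^m$ and $R\lim_m$, so they are available for $\CH(-,\bullet;\mathbb Z_\ell)$. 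For (a) itself: the homology of $z^*(\Spec\bar{\mathbbm k},\bullet)$ in degrees $0$ and $1$ is $\mathbb Z$ and $\bar{\mathbbm k}^*$; since $\bar{\mathbbm k}$ is algebraically closed and $\ell$ is invertible, $\bar{\mathbbm k}^*$ is $\ell$-divisible, so $\bar{\mathbbm k}^*\otimes\mathbb Z/\ell^m=0$, while $\mathbb Z$ is torsion-free, so the universal coefficient sequence forces $\CH(\Spec\mathbbm k,1;\mathbb Z/\ell^m)=0$ for all $m$, whence $\CH(\Spec\mathbbm k,1;\mathbb Z_\ell)=0$ (this is essentially \cite{Lar21}).

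Parts (b) and (c) are then formal. Homotopy invariance gives a quasi-isomorphism $z^*(\Spec\bar{\mathbbm k},\bullet)\simeq z^*(\mathbb A^n_{\bar{\mathbbm k}},\bullet)$, so $\CH(\mathbb A^n,1;\mathbb Z_\ell)\cong\CH(\Spec\mathbbm k,1;\mathbb Z_\ell)=0$. For $\mathbb P^n$ I would induct on $n$ using the stratification $\mathbb P^n=\mathbb A^n\sqcup\mathbb P^{n-1}$, whose localization sequence reads
$$
\CH(\mathbb P^{n-1},1;\mathbb Z_\ell)\rightarrow\CH(\mathbb P^n,1;\mathbb Z_\ell)\rightarrow\CH(\mathbb A^n,1;\mathbb Z_\ell),
$$
with outer terms zero by the inductive hypothesis and by (b), forcing the middle term to vanish; the base case $n=0$ is (a). (Alternatively, invoke the projective bundle formula for $z^*(\mathbb P^n,\bullet)$ directly.)

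For the classifying stacks I would use the standard algebraic approximations. As $B\mathbb G_m$ is approximated by $(\mathbb A^{N+1}\setminus 0)/\mathbb G_m=\mathbb P^N$, with $\CH^p(B\mathbb G_m,1;\mathbb Z_\ell)=\CH^p(\mathbb P^N,1;\mathbb Z_\ell)$ once $N\gg p$, part (d) follows from (c). For (e) I would identify $B\mu_n\rightarrow B\mathbb G_m$ with the complement of the zero section of the line bundle on $B\mathbb G_m$ of first Chern class $nt$, where $t$ generates $\CH^1(B\mathbb G_m)$, and run the associated Gysin sequence, which in codimension $p$ reads
$$
\CH^p(B\mathbb G_m,1;\mathbb Z_\ell)\rightarrow\CH^p(B\mu_n,1;\mathbb Z_\ell)\rightarrow\CH^{p-1}(B\mathbb G_m,0;\mathbb Z_\ell)\xrightarrow{\ \cdot\,nt\ }\CH^p(B\mathbb G_m,0;\mathbb Z_\ell).
$$
The left-hand term vanishes by (d), and since $\CH^*(B\mathbb G_m,0;\mathbb Z_\ell)=\mathbb Z_\ell[t]$ is an integral domain, multiplication by $nt$ is injective; hence $\CH^p(B\mu_n,1;\mathbb Z_\ell)=0$ for every $p$, giving (e). (One could instead approximate $B\mu_n$ by $(\mathbb A^{N+1}\setminus 0)/\mu_n$, a $\mathbb G_m$-torsor over $\mathbb P^N$, and localize there.)

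The main obstacle --- really the only point needing care --- is the compatibility bookkeeping: one must confirm that homotopy invariance, the projective bundle formula, localization, and the quotient approximations are genuinely induced by morphisms or triangles of cycle complexes, so that $-\otimes^L\mathbb Z/\ell^m$ and $R\lim_m$ commute with them, and that all the $\lim^1$-terms appearing vanish by finite generation. These facts are available from \cite{Lar21} together with the classical properties of Bloch's complexes in \cite{Bl86}.
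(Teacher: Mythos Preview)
Your argument is correct and follows the standard route: (a) via $\ell$-divisibility of $\bar{\mathbbm k}^*$ and the universal coefficient sequence, then homotopy invariance, localization/projective bundle, and approximation of classifying stacks for (b)--(e). The paper itself does not prove this proposition but simply refers to \cite{Bis23}, so there is no in-paper argument to compare against; your write-up is essentially what that reference would contain, and the compatibility remarks you flag (that the relevant quasi-isomorphisms and triangles of cycle complexes survive $-\otimes^L\mathbb Z/\ell^m$ and $R\lim$) are exactly the points one must check.
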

For proofs of these statements, see \cite{Bis23}. We will also make extensive use of the following
lemma, which will allow us to track the first higher Chow groups with $\ell$-adic coefficients of the various
stacks which appear during our stratification process.

\begin{lemma}\label{higher Chow patching}
Suppose that $Z\rightarrow X$ is a closed immersion with complement $U$. Suppose further that
the Chow ring of each is finitely generated and $\CH(Z)\rightarrow\CH(Z_{\bar{\mathbbm k}})$ is
injective.
Then:
\begin{enumerate}
\item if $\CH(Z,1;\mathbb Z_{\ell})$ and $\CH(U,1;\mathbb Z_{\ell})$ are both trivial, so is $\CH(X,1;\mathbb Z_{\ell})$
\item if $\CH(Z)\rightarrow\CH(X)$ is injective and $\CH(X,1;\mathbb Z_{\ell})$ is trivial, so is
$\CH(U,1;\mathbb Z_{\ell})$.
\end{enumerate}
\end{lemma}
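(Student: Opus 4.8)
The plan is to feed the localization long exact sequence for higher Chow groups with $\ell$-adic coefficients into a diagram chase, using Larson's localization sequence (from \cite{Lar21}) together with the hypotheses that pin down the low-degree terms. Write the relevant tail of the localization sequence as
$$
\CH(Z,1;\mathbb Z_{\ell})\rightarrow\CH(X,1;\mathbb Z_{\ell})\rightarrow\CH(U,1;\mathbb Z_{\ell})\xrightarrow{\partial}\CH(Z;\mathbb Z_{\ell})\rightarrow\CH(X;\mathbb Z_{\ell})\rightarrow\CH(U;\mathbb Z_{\ell})\rightarrow0.
$$
The first task is to justify that this sequence is exact with the stated terms: the higher Chow groups with $\ell$-adic coefficients sit in a localization sequence by \cite{Lar21}, and the finite generation hypotheses on $\CH(Z)$ and $\CH(U)$ guarantee that the $\mathbb Z/\ell^m$-coefficient cycle complexes have finitely generated homology, so the derived limit defining $\CH(-,n;\mathbb Z_{\ell})$ behaves well and the $\mathbb Z_{\ell}$-coefficient Chow groups appearing on the bottom row are the $\ell$-adic completions of the ordinary ones (again using finite generation). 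I would also invoke the injectivity of $\CH(Z)\rightarrow\CH(Z_{\bar{\mathbbm k}})$ to ensure that base change to $\bar{\mathbbm k}$ does not lose information at the level where the definition of $\CH(Z,n;\mathbb Z_{\ell})$ is computed over $\bar{\mathbbm k}$; this is what makes the terms in the sequence the "right" ones rather than artifacts of the definition using $X_{\bar{\mathbbm k}}$.

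For part (1), assume $\CH(Z,1;\mathbb Z_{\ell})=0$ and $\CH(U,1;\mathbb Z_{\ell})=0$. Then in the exact sequence the group $\CH(X,1;\mathbb Z_{\ell})$ is squeezed between $\CH(Z,1;\mathbb Z_{\ell})=0$ on the left and $\CH(U,1;\mathbb Z_{\ell})=0$ on the right, so it vanishes immediately by exactness. This direction is essentially formal once the sequence is set up.

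For part (2), assume $\CH(Z)\rightarrow\CH(X)$ is injective and $\CH(X,1;\mathbb Z_{\ell})=0$; we want $\CH(U,1;\mathbb Z_{\ell})=0$. From the sequence, $\CH(U,1;\mathbb Z_{\ell})$ injects (via $\partial$) into the kernel of $\CH(Z;\mathbb Z_{\ell})\rightarrow\CH(X;\mathbb Z_{\ell})$, because the image of $\CH(X,1;\mathbb Z_{\ell})=0$ in $\CH(U,1;\mathbb Z_{\ell})$ is zero. So it suffices to show that $\CH(Z;\mathbb Z_{\ell})\rightarrow\CH(X;\mathbb Z_{\ell})$ is injective. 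Here is where the hypothesis that $\CH(Z)\rightarrow\CH(X)$ is injective must be upgraded to $\mathbb Z_{\ell}$-coefficients: since these groups are finitely generated, $\CH(-;\mathbb Z_{\ell})$ is the $\ell$-adic completion, i.e. $\CH(-)\otimes_{\mathbb Z}\mathbb Z_{\ell}$, and $\mathbb Z_{\ell}$ is flat over $\mathbb Z$, so an injection $\CH(Z)\hookrightarrow\CH(X)$ stays injective after tensoring with $\mathbb Z_{\ell}$. (One should be slightly careful: flatness gives injectivity of $\CH(Z)\otimes\mathbb Z_{\ell}\rightarrow\CH(X)\otimes\mathbb Z_{\ell}$, and finite generation identifies these tensor products with the $\mathbb Z_{\ell}$-coefficient Chow groups.) Therefore $\partial$ is the zero map, forcing $\CH(U,1;\mathbb Z_{\ell})=0$ by exactness at that spot.

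The main obstacle is not the diagram chase but the bookkeeping that identifies the terms correctly: namely, checking that under the finite-generation and geometric-injectivity hypotheses, the $\mathbb Z_{\ell}$-coefficient Chow groups in Larson's localization sequence really are $\CH(-)\otimes_{\mathbb Z}\mathbb Z_{\ell}$ and that the sequence is exact where we need it, so that flatness of $\mathbb Z_{\ell}$ can be applied. Once that identification is in hand, both parts follow by a two-line chase. I would cite \cite{Lar21} for the localization sequence and \cite{Bis23} for the finite-generation/completion comparison, keeping the written proof short.
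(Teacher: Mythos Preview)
Your proposal is correct and matches the paper's approach: the paper simply writes ``This follows immediately from excision,'' and what you have written is precisely the unpacking of that one-line justification via the localization long exact sequence. Your additional care in explaining why finite generation and the base-change injectivity hypothesis are needed to identify $\CH(-;\mathbb Z_{\ell})$ with $\CH(-)\otimes_{\mathbb Z}\mathbb Z_{\ell}$ and to apply flatness is exactly the bookkeeping the paper leaves implicit.
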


\begin{proof}
This follows immediately from excision.
\end{proof}

Lemma 3.4 of \cite{DLV21} gives a way to express the Chow ring of $X$ as a fiber product
of the Chow rings of $U$ and $Z$ given that the top Chern class of the normal bundle of $Z$ isn't
a zero-divisor in $\CH(Z)$. This has led to the advent of several techniques where one enlarges a stack
so that the top Chern class isn't a zero-divisor. However, we will later find ourselves in a situation
where this class is a zero-divisor, and so we present a modified version of this lemma which
allows for zero-divisors.

\begin{lemma}[Porism of Lemma 3.4 \cite{DLV21}]\label{patching lemma}
Let $X$ be a smooth variety endowed with the action of a group $G$,
and let $i:Y\hookrightarrow X$ be a smooth, closed, and $G$-invariant
subvariety, with normal bundle $\mathcal N$. Consider the following cartesian diagram of rings:
$$
\begin{tikzcd}
R\arrow[r, "i^*"]\arrow[d, "j^*"] & \CH_G(Y)\arrow[d, "q"]\\
\CH_G(X\setminus Y)\arrow[r, "p"] & \CH_G(Y)/(c^G_{\text{top}}(\mathcal N))
\end{tikzcd}
$$
where the bottom horizontal arrow $p$ sends the class of a variety $V$ to the
equivalence class $i^*\eta$ where $\eta$ is any element in the set
$(j^*)\inv([V])$.

Then there is an isomorphism
$$
\frac{\CH_G(X)}{i_*(\Ann(c_{\text{top}}^G(\mathcal N)))}\xrightarrow{\sim} R.
$$
\end{lemma}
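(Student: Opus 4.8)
The plan is to follow the structure of the original proof of [DLV21, Lemma 3.4] and see exactly where the hypothesis that $c^G_{\text{top}}(\mathcal N)$ is a non-zero-divisor is used, then weaken the conclusion accordingly. The starting point is the excision sequence in $G$-equivariant Chow groups
$$
\CH_G(Y)\xrightarrow{i_*}\CH_G(X)\xrightarrow{j^*}\CH_G(X\setminus Y)\to 0,
$$
together with the self-intersection formula $i^*i_*(\alpha)=c^G_{\text{top}}(\mathcal N)\cdot\alpha$ for $\alpha\in\CH_G(Y)$. First I would define a candidate map $\Phi:\CH_G(X)\to R$ by $\Phi(\xi)=(j^*\xi,\ i^*\xi)$; this lands in the fiber product $R$ precisely because $q(i^*\xi)=p(j^*\xi)$ — unwinding the definition of $p$, this is the statement that $i^*\xi\equiv i^*\eta\pmod{c^G_{\text{top}}(\mathcal N)}$ for any $\eta\in(j^*)^{-1}(j^*\xi)$, which holds since $\xi-\eta\in\ker j^*=\im i_*$ and $i^*i_*=$ multiplication by $c^G_{\text{top}}(\mathcal N)$. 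So $\Phi$ is a well-defined ring homomorphism.

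The next step is to identify the kernel of $\Phi$. If $\Phi(\xi)=0$ then $j^*\xi=0$, so $\xi=i_*\beta$ for some $\beta\in\CH_G(Y)$; and $i^*\xi=0$ means $c^G_{\text{top}}(\mathcal N)\cdot\beta=0$, i.e. $\beta\in\Ann(c^G_{\text{top}}(\mathcal N))$. Conversely $i_*(\Ann(c^G_{\text{top}}(\mathcal N)))$ clearly lies in $\ker\Phi$. Hence $\ker\Phi=i_*(\Ann(c^G_{\text{top}}(\mathcal N)))$ exactly, which gives the injectivity of the induced map $\CH_G(X)/i_*(\Ann(c^G_{\text{top}}(\mathcal N)))\to R$. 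This is the point at which the original lemma's hypothesis was invoked: when $c^G_{\text{top}}(\mathcal N)$ is a non-zero-divisor the annihilator is trivial and one gets injectivity of $\Phi$ itself, but the argument above shows the kernel is controlled in general.

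It remains to prove surjectivity of $\Phi$ onto $R$. Given $(a,b)\in R$, i.e. $a\in\CH_G(X\setminus Y)$ and $b\in\CH_G(Y)$ with $q(b)=p(a)$, choose any $\eta\in(j^*)^{-1}(a)$ (possible by right-exactness of excision). Then $i^*\eta$ and $b$ have the same image in $\CH_G(Y)/(c^G_{\text{top}}(\mathcal N))$, so $b-i^*\eta=c^G_{\text{top}}(\mathcal N)\cdot\gamma = i^*i_*\gamma$ for some $\gamma\in\CH_G(Y)$. Set $\xi=\eta+i_*\gamma$. Then $j^*\xi=j^*\eta=a$ since $j^*i_*=0$, and $i^*\xi=i^*\eta+i^*i_*\gamma=i^*\eta+(b-i^*\eta)=b$, so $\Phi(\xi)=(a,b)$. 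This establishes the isomorphism $\CH_G(X)/i_*(\Ann(c^G_{\text{top}}(\mathcal N)))\xrightarrow{\sim}R$.

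**The main subtlety** I expect is not any single step above — each is a short diagram chase — but rather making sure that $p$ is genuinely well-defined as stated, i.e. that $i^*\eta\bmod c^G_{\text{top}}(\mathcal N)$ is independent of the choice of $\eta\in(j^*)^{-1}([V])$, and that the two composites $q\circ i^*$ and $p\circ j^*$ agree so that the square is actually commutative (not merely cartesian as an abstract fiber product). Both reduce to the identity $\ker j^*=\im i_*$ combined with $i^*i_*=c^G_{\text{top}}(\mathcal N)\cdot(-)$, so the self-intersection formula is really doing all the work; I would state it carefully at the outset and cite the standard reference. One should also note that $\Phi$ is automatically a ring map because $i^*$ and $j^*$ are, and the fiber-product ring structure on $R$ is the componentwise one.
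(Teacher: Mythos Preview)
Your proposal is correct and follows essentially the same approach as the paper: define the map $\CH_G(X)\to R$ via $(j^*,i^*)$, use excision plus the self-intersection formula $i^*i_*=c^G_{\text{top}}(\mathcal N)\cdot(-)$ to identify the kernel as $i_*(\Ann(c^G_{\text{top}}(\mathcal N)))$, and obtain surjectivity by a diagram chase. The paper's proof is terser---it defers the well-definedness of $p$ to \cite{DLV21} and leaves surjectivity as ``chasing elements''---but your explicit construction of the preimage $\xi=\eta+i_*\gamma$ is exactly the chase the paper has in mind.
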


\begin{proof}
We reduce to the case where $G=\{\id\}$ and refer the reader to \cite{DLV21} for the proof
that $p$ is well-defined.

Let $V$ be a closed subvariety of $X$. Then by definition of $p$ we have $p(j^*[V])=q(i^*[V])$, so we define
a map $\varphi:\CH(X)\rightarrow R$ on cycles by $[V]\mapsto(j^*[V], i^*[V])$. Chasing elements
in the diagram shows the surjectivity of $\varphi$. Now suppose that $\varphi(\alpha)=0$.
Then we must have $j^*(\alpha)=0$, and so $\alpha=i_*(\beta)$. But then
$$
0=i^*(\alpha)=i^*i_*(\beta)=c_{\text{top}}(\mathcal N)\beta,
$$
and so $\beta$ must be in the annihilator of $c_{\text{top}}(\mathcal N)$, that is,
$\alpha\in i_*(\Ann(c_{\text{top}}(N))$.
\end{proof}

This lemma allows us, in all circumstances, to compute the Chow ring of $X$ up to elements
of $i_*(\Ann(c_{\text{top}}^G(\mathcal N)))$. In particular, it becomes more useful as the codimension
of $Z$ increases, as we will see later.

\begin{lemma}\label{ideal pushforward}
Suppose $p:Z\rightarrow X$ is closed with complement $U$. If $p^*:\CH(X)\rightarrow\CH(Z)$
is surjective, then the image of $p^*$ in $\CH(X)$ is the ideal generated by $[Z]$.
\end{lemma}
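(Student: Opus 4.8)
The plan is to use the excision exact sequence together with the surjectivity hypothesis on $p^*$. Recall that for the closed immersion $p:Z\hookrightarrow X$ with open complement $j:U\hookrightarrow X$, we have the standard right-exact sequence $\CH(Z)\xrightarrow{p_*}\CH(X)\xrightarrow{j^*}\CH(U)\to 0$. The key point is that here we are given the \emph{flat pullback} $p^*:\CH(X)\to\CH(Z)$ (which exists since $Z\to X$ is a regular closed immersion of smooth stacks, or at least $Z$ is smooth), and it is assumed to be \emph{surjective}. I would first observe that by the projection formula, for any $\alpha\in\CH(X)$ we have $p_*(p^*\alpha)=[Z]\cdot\alpha$, where $[Z]=p_*(1)\in\CH(X)$ is the fundamental class of $Z$.

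From this the argument is short. First, the image of $p_*$ is clearly contained in the ideal $([Z])$: more precisely, I claim $\im(p_*)=([Z])$ as sets, not just that $\im(p_*)\supseteq$ the principal ideal up to the module structure. For the inclusion $\im(p_*)\subseteq([Z])$ one uses the projection formula in the form $p_*(\beta)=p_*(p^*(\gamma))=[Z]\cdot\gamma$ for a suitable $\gamma$: since $p^*$ is surjective, every $\beta\in\CH(Z)$ is of the form $p^*\gamma$ for some $\gamma\in\CH(X)$, hence $p_*\beta=[Z]\cdot\gamma\in([Z])$. Conversely, given any element $[Z]\cdot\gamma$ of the principal ideal, the same projection formula identity $[Z]\cdot\gamma=p_*(p^*\gamma)$ exhibits it as lying in $\im(p_*)$. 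Combining both inclusions gives $\im(p_*)=([Z])$, which is the assertion (reading ``the image of $p^*$ in $\CH(X)$'' as shorthand for $p_*p^*\CH(X)=\im p_*$, which coincides with $\im p_*$ precisely because $p^*$ is surjective).

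The main (really the only) subtlety I expect is bookkeeping about which map is meant: the statement says ``the image of $p^*$ in $\CH(X)$,'' but $p^*$ lands in $\CH(Z)$, so one must interpret this as $p_*(\im p^*)=\im(p_*\circ p^*)$, and the surjectivity of $p^*$ is exactly what makes $\im(p_*\circ p^*)=\im(p_*)$. I would state this interpretation explicitly at the start of the proof. No hard input is needed beyond the projection formula $p_*p^*=(\,\cdot\,)\cdot[Z]$ and the definition of the ideal generated by $[Z]$; the excision sequence itself is only needed insofar as it guarantees $p_*$ and $p^*$ are defined and the projection formula holds in this setting.
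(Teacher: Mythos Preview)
Your proposal is correct and follows essentially the same approach as the paper: both use the projection formula $p_*p^*(\beta)=\beta\cdot p_*(1)=[Z]\cdot\beta$ together with surjectivity of $p^*$ to identify $\im(p_*)$ with the ideal $([Z])$. Your write-up is in fact slightly more thorough than the paper's, since you spell out both inclusions and explicitly address the interpretation of ``the image of $p^*$ in $\CH(X)$'' as $\im(p_*)$.
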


\begin{proof}
Let $\alpha\in\CH(Z)$. Then there exists $\beta\in\CH(X)$ with $p^*(\beta)=\alpha$. Then
$p_*(\alpha)=p_*p^*(\beta)=\beta p_*(1)\in([Z])$.
\end{proof}

\section{Defining a stratification}
\begin{definition}
A proper, reduced, and connected $n$-pointed curve $C$ over an algebraically closed field $K$ is said to be
\textit{$A_r$-stable} if:
\begin{enumerate}
\item $C$ has at worst $A_r$ singularities, that is, each closed point $p\in C$ has
$$
\widehat{\mathcal O}_{C,p}\cong\frac{K[[x,y]]}{(y^2-x^{h+1})}
$$
for $0\leq h\leq r$,
\item the $p_i$ are distinct and lie in the smooth locus of $C$, and
\item $\omega_C(p_1+\dots+p_n)$ is ample.
\end{enumerate}
\end{definition}

\begin{definition}
A morphism $\mathcal C\rightarrow S$ with $n$ sections $p_i:S\rightarrow\mathcal C$
is a \textit{family of $n$-pointed $A_r$-stable genus $g$ curves} if $\mathcal C\rightarrow S$
is proper, flat, and finitely presented and each geometric fiber is an $n$-pointed $A_r$-stable genus
$g$ curve.
\end{definition}

\begin{definition}
Denote by $\mgrtilde{g,n}{r}$ the stack whose objects over a scheme $S$ are families of $n$-pointed
$A_r$-stable genus $g$ curves and whose morphisms are defined in the natural way.
\end{definition}
For more about $A_r$-stable curves, see \cite{Per23}.

\begin{definition}
Define a \textit{banana curve} to be an $n$-pointed $A_1$-stable elliptic curve consisting
of only rational components with no self-nodes.
\end{definition}

\begin{figure}[H]
\begin{center}
\includegraphics[scale=.15]{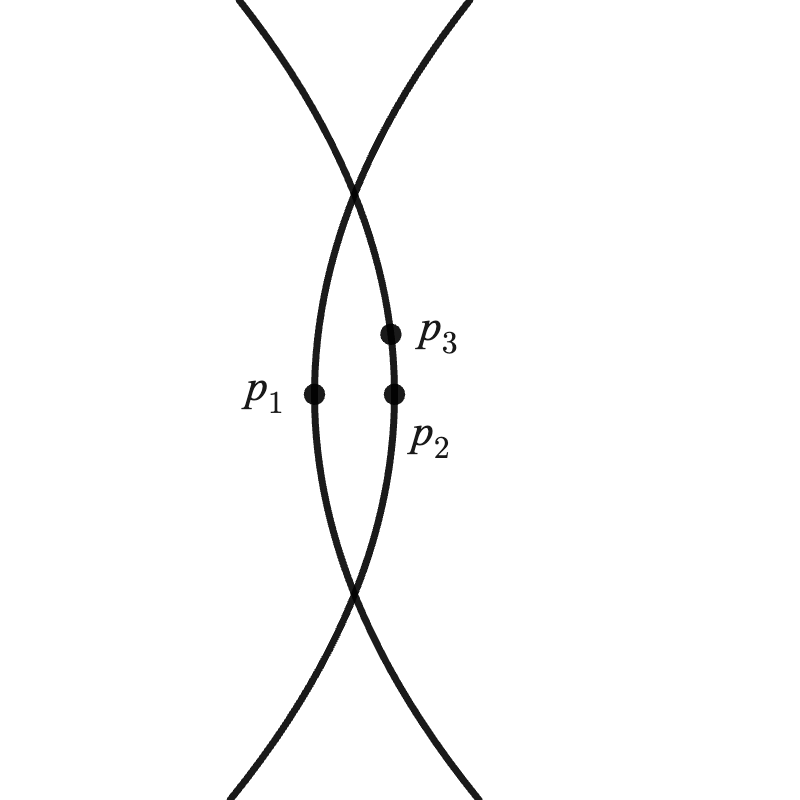}
\end{center}
\caption*{An example of a three-pointed banana curve. The lowest $n$ for which
$\mgbar{1,n}$ has a banana curve is $n=2$.}
\end{figure}

We will work with an enlargement of
$\mgrtilde{1,3}{2}\setminus\{\text{banana curves}\}$. More specifically, we allow the second marked point
to overlap with the node/cusp of a nodal/cuspidal rational curve, but we do not allow both the second
and third marked points to overlap with the node/cusp.
\begin{definition}
Let $\mathcal X$ be the stack whose objects over a scheme $S$ are proper,
flat, and finitely presented morphisms $\mathcal C\rightarrow S$
with three sections $p_i:S\rightarrow\mathcal C$
where the geometric fibers over each $s\rightarrow S$ satisfy:
\begin{itemize}
\item $(\mathcal C_{\bar s}, p_1,p_2,p_3)$ is an $A_2$-stable curve which is not a banana curve, or
\item $(\mathcal C_{\bar s},p_1)$ is an $A_2$-stable curve and we do not have
$p_1=p_2=p_3$ or $p_2=p_3=p$, where
$p$ is the self-node or cusp of a nodal or cuspidal rational curve.
\end{itemize}
\end{definition}

Using notation from the following definition, $\mathcal X$ is naturally an enlargement of
$\mgrtilde{1,3}{2}\setminus\mathcal X_{\alpha.1}$,
and loci (1)-(6) give a stratification of $\mathcal X$.

\begin{definition}\label{strata definitions}
We define the following strata, with (1)-(6) contained in $\mathcal X$ and (7) contained in $\mgbar{1,3}$.
\begin{enumerate}
\item Let $\mathcal U$ be the locus of irreducible curves where the second and third marked points
are not involutions of each other and neither point is fixed by the involution.

\item Let $\mathcal X_{\iota}$ be the locus of irreducible curves where the second and third marked
\textit{are} involutions or each other and neither point is fixed by the involution.

\item Let $\mathcal X_{23}$ be the locus of curves with one separating node
where the second and third marked points
are on the same rational component, but the first marked point is not.

\item Let $\mathcal X_{12}$ be the locus of curves with one separating node
where the first and second marked points are on the
same rational component, but the third marked point is not.

\item Let $\mathcal X_{13}$ be the locus of curves with one separating node
where the first and third marked points are on the
same rational component, but the second marked point is not.

\item Let $\mathcal X_{3}$ be the locus of curves which consist of an elliptic curve with a
three-pointed genus zero curve attached, (i.e. the
image of the gluing map from $\mgrtilde{1,1}{2}\times\mgbar{0,4}$).

\item Let $\mathcal X_{\alpha.1}$ be the locus of banana curves
where the first marked point is on its own rational component.
\end{enumerate}
\end{definition}

\begin{definition}\label{locus names}
We denote the fundamental class of $\mathcal X_*$ by $\delta_*$.
\end{definition}

Letting $\mathcal X^{\text{cusp}}$ be the locus of cuspidal curves in $\mathcal X$, we see that by
construction
$$
\mgbar{1,3}=(\mathcal X\setminus\mathcal X^{\text{cusp}})\cup\mathcal X_{\alpha.1}.
$$

We now compute the Chow rings and higher Chow groups with $\ell$-adic
coefficients of each stratum.
Our primary tool will be the Weierstrass equation for an elliptic curve, which we recap here.

\begin{theorem}[Weierstrass form for elliptic curves]\label{Weierstrass}
Any one-pointed smooth elliptic curve over a field $\mathbbm k$ of characteristic not equal to 2 or 3
can be written in the form $y^2z=x^3+axz^2+bz^3$,
where the marked point is the point at infinity $[0:1:0]$. Moreover, if we denote such a curve by
$C_{(a,b)}$, then
$$
C_{(a,b)}\cong C_{(a',b')}\quad\text{if and only if}\quad(a',b')=(t^{-4}a,t^{-6}b).
$$
The isomorphism
between these curves is given by
$$
[x:y:z]\mapsto[t^{-2}x:t^{-3}y:z].
$$
An elliptic curve
is smooth if and only if $D=4a^3+27b^2\neq0$, nodal if and only if $D=0$ and $(a,b)\neq(0,0)$, and
cuspidal if and only if $(a,b)=(0,0)$. Lastly, we have
$$
H^0(\omega_C)=\left<\frac{dx}y\right>.
$$
\end{theorem}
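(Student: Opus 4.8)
The plan is to reproduce the classical derivation of the Weierstrass form via Riemann--Roch (as in Silverman's \emph{The Arithmetic of Elliptic Curves}, Ch.\ III). First I would use that a one-pointed genus one curve $(C,O)$ satisfies $\ell(nO)=n$ for all $n\geq 1$ by Riemann--Roch, since $\deg(nO)=n>2g-2=0$ and $H^1$ vanishes. This lets me choose $1\in H^0(\mathcal O_C(O))$, then $x\in H^0(\mathcal O_C(2O))$ with a pole of exact order $2$ at $O$, and $y\in H^0(\mathcal O_C(3O))$ with a pole of exact order $3$. The seven functions $1,x,y,x^2,xy,x^3,y^2$ all lie in the $6$-dimensional space $H^0(\mathcal O_C(6O))$, so they satisfy a linear relation; comparing orders of poles at $O$ forces the coefficients of $x^3$ and $y^2$ to be nonzero, and after rescaling $x,y$ I may take them equal. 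This produces a generalized Weierstrass equation $y^2+a_1xy+a_3y=x^3+a_2x^2+a_4x+a_6$. Since $\fieldchar\mathbbm k\neq 2$ I complete the square in $y$ to kill $a_1,a_3$, and since $\fieldchar\mathbbm k\neq 3$ I complete the cube in $x$ to kill $a_2$, arriving at $y^2=x^3+ax+b$. That $(x:y:1)$ gives a closed immersion of $C$ into $\mathbb P^2$ with image the projective closure of this affine curve follows from very ampleness of $\mathcal O_C(3O)$ (degree $3\geq 2g+1$ on a genus one curve); homogenizing yields $y^2z=x^3+axz^2+bz^3$ with $O\mapsto[0:1:0]$.

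Next, for the isomorphism statement: any isomorphism $(C_{(a,b)},O)\to(C_{(a',b')},O')$ of pointed curves carries $\mathcal O(nO)$ to $\mathcal O(nO')$, hence sends the distinguished functions $x,y$ to functions with the same pole orders at the respective origins. In short Weierstrass form (with $\fieldchar\mathbbm k\neq 2,3$) the only remaining ambiguity in the choice of $(x,y)$ is $x\mapsto u^2x$, $y\mapsto u^3y$ for $u\in\mathbbm k^*$; substituting into $y^2=x^3+ax+b$ replaces $(a,b)$ by $(u^4a,u^6b)$. Setting $u=t\inv$ and tracking the effect on homogeneous coordinates gives the stated map $[x:y:z]\mapsto[t^{-2}x:t^{-3}y:z]$ and the relation $(a',b')=(t^{-4}a,t^{-6}b)$; conversely every such $t$ manifestly defines an isomorphism, so this accounts for all of them.

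Finally I would analyze singularities directly on the affine equation. A singular point satisfies $2y=0$ and $3x^2+a=0$ together with $y^2=x^3+ax+b$; since $\fieldchar\mathbbm k\neq 2$ this forces $y=0$, so $C$ is singular exactly when $x^3+ax+b$ has a repeated root, i.e.\ when its discriminant $-(4a^3+27b^2)$ vanishes, giving $D=0$. The point at infinity is always smooth (in the chart $y=1$ the equation reads $z=x^3+axz^2+bz^3$, whose $z$-partial is $1$ at the origin). The singularity is a cusp precisely when $x^3+ax+b$ has a triple root, and matching the coefficient of $x^2$ forces that root to be $0$, hence $a=b=0$; otherwise a double root gives a node. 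For the differential, on the smooth locus $2y\,dy=(3x^2+a)\,dx$ exhibits $\tfrac{dx}{y}=\tfrac{2\,dy}{3x^2+a}$ as regular and nowhere vanishing away from $y=0$ and away from $3x^2+a=0$, loci that are disjoint on a smooth curve, while a local computation at $[0:1:0]$ confirms regularity there; since $\omega_C$ is trivial of rank one, $\tfrac{dx}{y}$ generates $H^0(\omega_C)$. The one genuinely delicate point is the uniqueness half of the isomorphism statement --- verifying that the \emph{only} coordinate changes preserving the short Weierstrass shape are the $(u^2,u^3)$-scalings --- which is where I would argue most carefully; everything else is a routine consequence of Riemann--Roch.
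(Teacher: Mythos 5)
Your proposal is correct: it is the standard Riemann--Roch derivation of the short Weierstrass form (as in Silverman III.3), together with the usual discriminant analysis of singularities and the observation that $\omega_C\cong\mathcal O_C$ forces any nowhere-vanishing regular differential such as $\tfrac{dx}{y}$ to generate $H^0(\omega_C)$. The paper states this theorem as a recap of classical material and offers no proof of its own, so there is nothing to compare against; the one point you flag as delicate --- that the only substitutions preserving the short form are $(x,y)\mapsto(u^2x,u^3y)$ --- is indeed settled by the general change-of-variables $x=u^2x'+r$, $y=u^3y'+su^2x'+t$, where vanishing of the $xy$, $y$, and $x^2$ coefficients forces $s=t=r=0$ in characteristic $\neq 2,3$.
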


\begin{note}\label{hodge bundle pullback}
While one can state this theorem with each weight of the $\mathbbm k^*$ action positive,
picking negative weights
makes $\mathbbm k^*$ act on $\frac{dx}y$ with weight one, and hence the pullback of the generator
$x\in\CH(B\mathbb G_m)$ to the Chow ring of loci (1)-(7) (below) is $\lambda_1$, the first
Chern class of the Hodge bundle.
\end{note}

\begin{proposition}\label{strata Chow rings}
The Chow ring and top Chern class of the normal bundle (when relevant) of each stratum is as follows:
\begin{enumerate}
\item
$\displaystyle{\CH(\mathcal U)=\frac{\mathbb Z[\lambda_1]}{(2\lambda_1)}};$
\item
$\displaystyle{\CH(\mathcal X_{\iota})=\frac{\mathbb Z[\lambda_1]}{(3\lambda_1)}};$
\item
$\displaystyle{\CH(\mathcal X_{23})=\frac{\mathbb Z[\lambda_1]}{(12\lambda_1^2)}};$
$c_1(\mathcal N)=-\lambda_1$;
\item
$\displaystyle{\CH(\mathcal X_{12})=\mathbb Z[\lambda_1}];$
$c_1(\mathcal N)=-\lambda_1$;
\item
$\displaystyle{\CH(\mathcal X_{13})=\mathbb Z[\lambda_1]};$
$c_1(\mathcal N)=-\lambda_1$;
\item
$\displaystyle{\CH(\mathcal X_{3})=\frac{\mathbb Z[\lambda_1,x]}{(x^2)}};$
$c_1(\mathcal N)=-\lambda_1-x$;
\item
$\displaystyle{\CH(\mathcal X_{\alpha.1})=\frac{\mathbb Z[\lambda_1,x]}{(2\lambda_1,x(x+\lambda_1))}}$;
$c_2(\mathcal N)=0$.
\end{enumerate}
Additionally, the first higher Chow group with $\ell$-adic coefficients of each stratum vanishes for
$\ell=2,3$.
\end{proposition}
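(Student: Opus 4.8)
The plan is to present each stratum as a global quotient via the Weierstrass normal form of Theorem~\ref{Weierstrass} and then to compute by equivariant intersection theory. For the irreducible strata $\mathcal U$ and $\mathcal X_\iota$, one puts the (smooth, nodal, or cuspidal) genus-one curve in the form $C_{(a,b)}$ with $p_1$ the point at infinity and records $p_2=(x_2,y_2)$, $p_3=(x_3,y_3)$; using the equation satisfied by $p_2$ to eliminate $b$ realizes the parameter space as an explicit hypersurface in $\mathbb A^5_{a,x_2,y_2,x_3,y_3}$ carrying the induced $\mathbb G_m$-action of weights $(-4,-2,-3,-2,-3)$, and the stratum is the quotient by $\mathbb G_m$ of the complement of the $\mathbb G_m$-invariant closed loci imposed by the marked-point and involution conditions (loci such as $\{y_2=0\}$, $\{x_2=x_3\}$, $\{p_3=\iota(p_2)\}$, whose precise list depends on the stratum). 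For the reducible strata one uses the gluing maps from $\mgrtilde{1,1}{2}$ or $\mgrtilde{1,2}{2}$ together with $\mgbar{0,3}$ and $\mgbar{0,4}$; the structural point that separates $\mathcal X_{23}$ from $\mathcal X_{12}$ and $\mathcal X_{13}$ is that $p_1$ is always forced into the smooth locus (both clauses defining $\mathcal X$ require $(\mathcal C,p_1)$ to be $A_2$-stable) while $p_2,p_3$ may sit at a node, so when the elliptic component carries $p_1$ one must delete the codimension-two locus of singular points of the universal cubic, $\{y=0,\ 3x^2+a=0\}$, whereas when it carries $p_2$ or $p_3$ nothing is deleted, and $\mathcal X_3$ comes out as $\mgrtilde{1,1}{2}\times\mgbar{0,4}$.

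Each Chow ring is then computed by a chain of excisions beginning from $\CH_{\mathbb G_m}(\mathbb A^N)=\CH(B\mathbb G_m)=\mathbb Z[\lambda_1]$, where $\lambda_1$ is identified with the first Chern class of the Hodge bundle by Note~\ref{hodge bundle pullback} (and, for $\mathcal X_3$ and $\mathcal X_{\alpha.1}$, one also carries the hyperplane class $x$ on $\mgbar{0,4}\cong\mathbb P^1$). At each step the restriction to the removed locus is surjective, everything being generated over $\mathbb Z$ by the pulled-back classes $\lambda_1$ and $x$, so Lemma~\ref{ideal pushforward} identifies the kernel of the excision map with the ideal generated by the class of that locus; these classes are read off from $\mathbb G_m$-weights, a weight-$w$ hypersurface contributing $(w\lambda_1)$ and the singular-point locus above (a transverse intersection of a weight-$(-3)$ and a weight-$(-4)$ hypersurface) contributing $(12\lambda_1^2)$, so that the seven displayed presentations fall out. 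The normal bundles come from deformation theory of nodes: for the codimension-one strata $\mathcal X_{ij}$ and $\mathcal X_3$ the normal bundle is the node-smoothing line bundle $T_pC'\otimes T_qC''$, whose first Chern class is $-\lambda_1$ (the tangent line at the origin of the elliptic component is dual to the Hodge bundle) in the $\mathcal X_{ij}$ cases and $-\lambda_1-x$ for $\mathcal X_3$; for $\mathcal X_{\alpha.1}$ the normal bundle has higher rank, the two node-smoothing directions being interchanged by the dihedral symmetry of the banana, and one checks directly in the presentation $\mathbb Z[\lambda_1,x]/(2\lambda_1,x(x+\lambda_1))$ that its relevant top Chern class $c_2(\mathcal N)$ vanishes, which is what will later license the application of Lemma~\ref{patching lemma} with a maximal annihilator.

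For the vanishing of $\CH(-,1;\mathbb Z_\ell)$ with $\ell=2,3$ one runs the same excision towers and invokes Lemma~\ref{higher Chow patching}, with base cases the vanishing for $\mathbb A^n$, $\mathbb P^n$, $B\mathbb G_m$ and $B\mu_n$ established earlier. Each closed locus removed along the way is, by the same Weierstrass analysis applied inductively, an affine bundle over one of these elementary stacks, hence has finitely generated Chow ring with injective comparison to the geometric Chow ring and trivial first higher Chow group; Lemma~\ref{higher Chow patching}(1) then pushes vanishing up a stratum built from pieces, and part~(2) pushes it along an excision whenever the pushforward $\CH(Z)\to\CH(X)$ is injective, which holds as soon as the equivariant Euler class of the normal bundle of $Z$ is a non-zero-divisor in $\CH(Z)$. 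The main obstacle is exactly the steps where this fails — deleting a weight-two locus after a relation such as $3\lambda_1=0$ has already been imposed, and the codimension-two banana locus — where Lemma~\ref{higher Chow patching}(2) cannot be quoted formally and one must instead identify the resulting open stratum directly (a $\mathbb G_m$-quotient of a torus or of an affine cone, i.e.\ a $\mu_m$-gerbe over a scheme whose higher Chow vanishes) or reroute through part~(1) by handling the closed complements first; arranging the towers so that injectivity of the pushforward holds at every stage, or else falling back on part~(1), is the delicate point, and the rest is bookkeeping with weights.
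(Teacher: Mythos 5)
Your proposal follows the paper's proof in all essentials: Weierstrass presentations of each stratum as $\mathbb G_m$-quotients of (opens in) affine spaces, Chow rings by excision from $\CH(B\mathbb G_m)$ with the excised classes read off from the $\mathbb G_m$-weights, normal bundles via the node-smoothing line $T_pC'\otimes T_qC''$, and higher Chow vanishing by running Lemma~\ref{higher Chow patching} down to the elementary base cases. Two points deserve care beyond what you specify. First, the "precise list" of excised loci for $\mathcal U$ is not optional bookkeeping: the paper's computation removes \emph{only} the diagonal $\{x_2=x_3\}$ (a weight-$(-2)$ divisor, giving $(2\lambda_1)$); if you additionally remove the involution-fixed loci $\{y_2=0\}$ and $\{y_3=0\}$, as a literal reading of Definition~\ref{strata definitions}(1) and your parenthetical list would suggest, you impose $3\lambda_1=0$ as well and land on $\CH(\mathcal U)=\mathbb Z$, contradicting item (1). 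Those curves must in fact remain in $\mathcal U$ for the strata to cover $\mathcal X$, so the condition $\{y_i=0\}$ is only excised for $\mathcal X_\iota$. Second, $c_2(\mathcal N)=0$ for $\mathcal X_{\alpha.1}$ is not a formal consequence of the presentation $\mathbb Z[\lambda_1,x]/(2\lambda_1,x(x+\lambda_1))$, whose degree-two part is nonzero; the paper obtains it via the self-intersection formula $c_2(\mathcal N)=p^*\delta_{\alpha.1}$ together with the WDVV expression $\delta_{\alpha.1}=6\lambda_1(\cdots)$ from Lemma~\ref{WDVV} (so that $p^*\delta_{\alpha.1}$ dies against $2\lambda_1=0$), a forward reference you would need to either reproduce or replace by an honest $\mu_2$-equivariant identification of the two node-smoothing lines on $\mathbb P^1$. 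Finally, your worry about failures of injectivity of the pushforward is moot for this proposition: the only excisions occurring inside a single stratum have pushforward given by multiplication by $2\lambda_1$ or $12\lambda_1^2$ on $\mathbb Z[\lambda_1]$, both injective, so Lemma~\ref{higher Chow patching}(2) applies everywhere it is needed here; the genuinely delicate injectivity questions arise only in the later patching steps.
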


\begin{proof}[Proof of (1)]
The defining conditions for $\mathcal U$ precisely tell us that, letting $p_2=(x_2,y_2)$ and
$p_3=(x_3,y_3)$ be the second and third marked points written in Weierstrass form, we have
$x_2\neq x_3$. We can then solve the system of equations

\begin{align*}
y_2^2&=x_2^3+ax_2+b\\
y_3^2&=x_3^3+ax_3+b\\
\intertext{for $b$ and then $a$ to see that}
b&=y_2^2-(x_2^3+ax_2)\\
&=y_3^2-(x_3^3+ax_3)\\
a&=\frac{y_2^2-x_2^3-(y_3^2-x_3^3)}{x_2-x_3}.
\end{align*}
Therefore we can allow $x_2,y_2,x_3,y_3$ to vary freely provided that $x_2\neq x_3$. That is,
$$
\mathcal U\cong
\left[\frac{\mathbb A^2_{x_2,x_3}\setminus\{\text{diag}\}\times\mathbb A^2_{y_2,y_3}}{\mathbb G_m}\right].
$$
By Theorem \ref{Weierstrass}, $\mathbb G_m$ acts on this with weight $-2$ on the $x_i$'s and $-3$ on the $y_i$'s,
and (1) follows.
\end{proof}

\begin{proof}[Proof of (2)]
As they are involutions of each other, the third marked point is determined by the second marked point.
The condition that neither point is fixed by the involution translates to $y_2\neq0$.
Therefore
$$
\mathcal X_{\iota}\cong\left[\frac{\mathbb A^1_a\times\mathbb A^1_{x_2}\times(\mathbb A^1_{y_2}\setminus 0)}{\mathbb G_m}\right],
$$
which is a vector bundle over $B\mu_3$.
\end{proof}

\begin{proof}[Proof of (3)]
An analysis similar to the one above shows that
$$
\mathcal X_{23}\subseteq\left[\frac{\mathbb A^1_a\times\mathbb A^1_{x_2}\times\mathbb A^1_{y_2}}{\mathbb G_m}\right],
$$
and is the complement of the locus where $a+3x^2=0$ and $y=0$. This closed locus is isomorphic
to $[\mathbb A^1/\mathbb G_m]$ and has fundamental class $12\lambda_1^2$ (as it is the transverse intersection
of two divisors of weight 4 and 3, respectively). Then the Chow ring follows from excision.

To see the normal bundle statement, let $T_p$ denote the tangent space at $p$. Then
$c_1(\mathcal N_{\mathcal X_{23}})=c_1(T_{p_2})=-\psi_2=-\lambda_1$.
\end{proof}

\begin{proof}[Proof of (4) and (5)]
We use the same reasoning as in (2) and (3), except now there is no limitation on where the marked point
may go. Therefore
$$
\mathcal X_{ij}\cong\left[\frac{\mathbb A^1_a\times\mathbb A^1_{x}\times\mathbb A^1_{y}}{\mathbb G_m}\right]
$$
for $ij=12,13$. Again, similar reasoning as in (3) shows that $c_1(\mathcal N)=-\lambda_1$.
\end{proof}

\begin{proof}[Proof of (6)]
The Chow statement follows since $\mathcal X_3$ is the isomorphic image of
$$
\mgrtilde{1,1}{2}\times\mgbar{0,4}\cong\left[\frac{\mathbb A^2_{-4,-6}}{\mathbb G_m}\right]\times\mathbb P^1.
$$
And since $\mathcal X_3$ is the image of the gluing
morphism $\mgrtilde{1,1}{2}\times\mgbar{0,4}$, its normal bundle is 
$$
c_1(\mathcal N)=-\lambda_1-x\in
\CH(\mgrtilde{1,1}{2}\times\mgbar{0,4})=\mathbb Z[\lambda_1,x]/(x^2).
$$
\end{proof}

\begin{proof}[Proof of (7)]
This locus is the image of the gluing morphism from $\mgbar{0,3}\times\mgbar{0,4}\cong\mathbb P^1$.
There is an action of $\mu_2$ given by exchanging the nodes which exhibits
$\mathbb P^1\rightarrow \mathcal X_{\alpha.1}$
as a $\mu_2$-torsor, and so
$\mathcal X_{\alpha.1}\cong[\mathbb P^1/\mu_2]$ where the action has weight $(0,1)$. The claim then follows
from the projective bundle formula. We also have
$$
c_2(\mathcal N)=p^*(\delta_{\alpha.1})=
p^*(6\lambda_1(\lambda_1+\delta_{12}+\delta_{13}-\delta_{23}+\delta_3))=0,
$$
by the proof of Theorem \ref{m13bar Chow ring}.
\end{proof}

\begin{proposition}
The stack $\mathcal X$ is smooth.
\end{proposition}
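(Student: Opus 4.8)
The plan is to verify smoothness locally, by producing an étale (or smooth) chart around each point of $\mathcal X$ and checking that the source is smooth. Since $\mathcal X$ is a moduli stack of curves with at worst $A_2$ singularities, deformation theory tells us that the obstruction space to deforming a pointed curve $(\mathcal C_{\bar s}, p_1, p_2, p_3)$ — where we only remember the sections that are actually required to lie in the smooth locus — is controlled by $\operatorname{Ext}^2$ of the appropriate complex, which vanishes for curves. More precisely, $A_r$-stable curves are lci, so local-to-global $\operatorname{Ext}$ and the fact that a curve has cohomological dimension $1$ force the obstruction space to vanish; the only subtlety is keeping track of which marked points are constrained. So the first step is to recall from \cite{Per23} that $\mgrtilde{1,3}{2}$ and $\mgrtilde{1,1}{2}$ are smooth, and to set up the analogous deformation-theoretic statement for $\mathcal X$, whose objects interpolate between ``all three points in the smooth locus'' and ``only $p_1$ constrained.''

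Next I would stratify the argument according to the combinatorial type of the fiber, matching the strata (1)--(7) of Definition \ref{strata definitions} together with the cuspidal degenerations. Away from the locus where $p_2$ or $p_3$ meets a singular point of $\mathcal C$, the stack $\mathcal X$ agrees with an open substack of $\mgrtilde{1,3}{2}$ (or its banana-curve-removed version), so smoothness there is inherited. The genuinely new points are those where $p_2$ (or $p_2=p_3$) sits at a node or cusp of a rational tail; here I would write down an explicit versal deformation. For the node-of-a-rational-tail case, the local picture near that point of the total space is $\operatorname{Spec} \mathbbm k[[x,y,t]]/(xy-t)$ with the section $p_2$ cutting out $\{x=y=0\}$ inside the central fiber, and the deformation space of (curve, point-allowed-to-be-at-the-node) is smooth because one is simply allowed to smooth the node ($t$) and move the base freely. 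The proof of Proposition \ref{strata Chow rings} already exhibits each stratum as a quotient of an affine space or $\mathbb P^1$ by $\mathbb G_m$ or $\mu_n$, so in fact one can read off smoothness of each \emph{stratum}; the remaining content is transversality, i.e. that $\mathcal X$ is smooth along the closed strata and not merely on each stratum separately.

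For that transversality I would use the Weierstrass-equation charts of Theorem \ref{Weierstrass}, which present large open pieces of $\mathcal X$ explicitly: near irreducible fibers, the proofs of (1)--(3) exhibit the relevant chart as $[\mathbb A^N/\mathbb G_m]$, which is manifestly smooth, and the degeneration to nodal/cuspidal members corresponds to the discriminant locus $D = 4a^3 + 27b^2$ inside a \emph{smooth} ambient space — so smoothness of $\mathcal X$ persists through these degenerations because we are not imposing $D=0$ but merely allowing it. Near reducible fibers I would use the gluing morphisms from $\mgrtilde{1,1}{2}\times\mgbar{0,4}$ and similar products: these are closed immersions (or finite onto their images) with source a product of smooth stacks, and a local computation of the normal bundle — already carried out in Proposition \ref{strata Chow rings}, where $c_1(\mathcal N)$ is a line bundle class, hence $\mathcal N$ is locally free — shows the embedding is regular, so the total space is smooth along these loci as well. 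The main obstacle is the bookkeeping at the point where $p_2 = p_3$ lands on the node of a rational component while $p_1$ is elsewhere: one must check that \emph{simultaneously} allowing the two marked points to collide and allowing them to sit at the node does not create a non-reduced or singular deformation space, and here I expect to need an explicit three-parameter versal family (one parameter smoothing the node, one moving $p_2=p_3$ off the node, one separating $p_2$ from $p_3$) and a direct verification that these deformations are unobstructed and independent.
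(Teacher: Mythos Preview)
Your proposal circles around the right idea but is considerably more complicated than needed, and it contains a misreading of the definition of $\mathcal X$ that sends you off on a detour.

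The paper's argument is short: write $\mathcal X$ as the union of two \emph{open} substacks. The first is $\mgrtilde{1,3}{2}\setminus\{\text{banana curves}\}$, which is smooth since $\mgrtilde{1,3}{2}$ is. The second is the locus of \emph{irreducible} curves, presented globally via the Weierstrass equation as $[Y^{\circ}/\mathbb G_m]$ where $Y=V(y_i^2-(x_i^3+ax_i+b))_{i=2,3}\subset\mathbb A^6$ and $Y^{\circ}$ is the complement of the locus where both $p_2$ and $p_3$ sit at the singular point. A one-line Jacobian check shows $Y^{\circ}$ is smooth, and that is the whole proof.

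Your proposal instead tries to assemble smoothness from the locally closed strata (1)--(7), which is harder: knowing each stratum is a smooth quotient tells you nothing about how they glue. Your attempt to repair this via normal bundles is circular --- the statement ``$c_1(\mathcal N)$ is a line bundle class, hence $\mathcal N$ is locally free, hence the embedding is regular, so the total space is smooth'' assumes what you want to prove, since Proposition~\ref{strata Chow rings} computes $\mathcal N$ only after one knows $\mathcal X$ is smooth.

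More concretely, you misread which curves lie in the ``new'' part of $\mathcal X$. The second bullet in the definition requires $(\mathcal C_{\bar s},p_1)$ to be a one-pointed $A_2$-stable genus~$1$ curve; stability forces such a curve to be irreducible. So the extra points of $\mathcal X$ are exactly irreducible (smooth, nodal, or cuspidal) curves with $p_2,p_3$ allowed to wander, and there are no ``rational tails'' to analyze. Likewise, the configuration you flag as the main obstacle --- $p_2=p_3$ landing on the self-node --- is explicitly excluded from $\mathcal X$ by the clause ``we do not have $p_2=p_3=p$.'' Once you realize the new locus is entirely covered by the single Weierstrass chart $Y^{\circ}$, the stratum-by-stratum and deformation-theoretic machinery becomes unnecessary.
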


\begin{proof}
Consider $Y=V(y_i^2-(x_i^3+ax_i+b))_{i=2,3}\subseteq\mathbb A^6$ with coordinates $a,b,x_i,y_i$. By similar
arguments as above,
$[Y/\mathbb G_m]$ parametrizes irreducible one-pointed affine elliptic curves with two additional marked points.

Let $Y^{\circ}\subset Y$ be the complement of the locus of points which in $[Y/\mathbb G_m]$ give
curves where the second and third marked points overlap with a node/cusp.
By the Jacobian criterion $Y^{\circ}$
is smooth, and hence so is $[Y^{\circ}/\mathbb G_m]$. Therefore $\mathcal X$
is smooth as it is the union of two smooth opens:
$$
\mathcal X=\left(\mgrtilde{1,3}{2}\setminus\{\text{banana curves}\}\right)\cup[Y^{\circ}/\mathbb G_m].
$$
\end{proof}

\begin{proposition}
The stack $\mathcal X$ is a quotient stack.
\end{proposition}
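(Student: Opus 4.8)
The plan is to appeal to a theorem of Totaro: a smooth (hence normal), Noetherian algebraic stack over $\mathbbm k$ with affine stabilizers is isomorphic to $[U/\mathrm{GL}_N]$ for a quasi-affine scheme $U$ and some $N$ precisely when it has the resolution property, i.e.\ every coherent sheaf is a quotient of a vector bundle. The stack $\mathcal X$ is smooth by the previous proposition and of finite type over $\mathbbm k$; its stabilizers are affine, since automorphism groups of pointed curves with at worst $A_2$ singularities are affine algebraic groups, and its diagonal is affine, since the $\mathrm{Isom}$-functors of such pointed curves are representable by affine schemes (locally closed in Hilbert-scheme-type parameter spaces). So the whole problem reduces to verifying that $\mathcal X$ has the resolution property.

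For that I would use the open cover $\mathcal X=(\mgrtilde{1,3}{2}\setminus\{\text{banana curves}\})\cup[Y^{\circ}/\mathbb G_m]$ produced in the proof that $\mathcal X$ is smooth. The second piece $[Y^{\circ}/\mathbb G_m]$ is a quotient stack with $Y^{\circ}$ quasi-affine, hence has the resolution property. The first piece is an open substack of $\mgrtilde{1,3}{2}$, which is itself a quotient stack: one polarizes the universal curve by $\omega_{\mathcal C}(p_1+p_2+p_3)$, which is an honest line bundle here (all marked points lie in the smooth locus) and is relatively ample by $A_r$-stability, and thereby realizes $\mgrtilde{1,3}{2}$ as $[\mathcal H/\mathrm{GL}_N]$ for $\mathcal H$ locally closed in an appropriate Hilbert scheme (cf.\ \cite{Per23}); an open substack of a quotient stack is again one, so this piece also has the resolution property, as does the overlap. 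It remains to conclude that $\mathcal X$, which has affine diagonal and is covered by two opens with the resolution property whose intersection has it as well, has the resolution property globally. Concretely this means: given a coherent sheaf on $\mathcal X$, resolve its restriction to each of the two opens by a vector bundle, and patch the two resolutions across the overlap, using regularity of $\mathcal X$ and the explicit geometry of $(\mgrtilde{1,3}{2}\setminus\{\text{banana curves}\})\cap[Y^{\circ}/\mathbb G_m]$.

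The main obstacle is exactly this globalization step, since the resolution property is not formally Zariski-local; one must either invoke a Mayer--Vietoris-type criterion for the resolution property (in the spirit of Gross's work on tensor generators) or do the patching by hand. A cleaner alternative, if it can be pushed through, is to exhibit a tensor generator on $\mathcal X$ outright — the natural candidate being the Hodge bundle $\mathbb E$, which is a line bundle in genus one — which requires checking that every stabilizer group occurring in $\mathcal X$ is diagonalizable and that $\mathbb E$ restricts to a faithful character of it; this is plausible here but would need a case analysis over the strata of Definition~\ref{strata definitions} (in particular tracking the $\mathbb G_m$-actions coming from cuspidal components and the $\mu_n$-actions at special elliptic curves). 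Either way, once the resolution property is established, Totaro's theorem immediately yields that $\mathcal X$ is a quotient stack.
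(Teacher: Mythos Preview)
Your first strategy, via Totaro's resolution-property criterion, has the gap you yourself flag: the resolution property is not Zariski-local, and you do not actually carry out the Mayer--Vietoris-style gluing or the hand patching across the two opens. As written this is a proof outline with its hardest step left as an obstacle, not a proof. Your second strategy, exhibiting the Hodge bundle as a tensor generator in the sense of Gross, is on the right track but you again stop short of the stratum-by-stratum verification that the Hodge character is faithful on every stabilizer.

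The paper's argument is exactly your second idea, stripped of the Totaro/Gross machinery. The Hodge line bundle gives a morphism $\mathcal X\to B\mathbb G_m$; pulling back the universal torsor $\Spec\mathbbm k\to B\mathbb G_m$ yields a principal $\mathbb G_m$-bundle $Y\to\mathcal X$, and the only thing to check is that $Y$ is an algebraic space. For this the paper uses the stratification (1)--(6) of Definition~\ref{strata definitions}: each stratum was already presented as $[Y_i/\mathbb G_m]$ with the map to $B\mathbb G_m$ given by the Hodge bundle (Note~\ref{hodge bundle pullback}), so $Y$ stratifies into the algebraic spaces $Y_i$ and is therefore an algebraic space itself. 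In other words, the case analysis you were proposing to do for the tensor-generator criterion \emph{is} the entire proof once you phrase it this way---``the Hodge bundle restricts to the defining $\mathbb G_m$-character on each $[Y_i/\mathbb G_m]$'' is precisely the compatibility condition needed. This buys you the conclusion with $G=\mathbb G_m$ rather than some large $\mathrm{GL}_N$, and avoids any appeal to the resolution property.
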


\begin{proof}
This follows from the following two lemmas.
\end{proof}

\begin{lemma}\label{quotient stack lemma}
Suppose that $\mathcal Y$ is a stack which stratifies into locally closed $\mathcal Y_i$ such that each
$\mathcal Y_i \cong[Y_i/G]$ is a quotient stack (note that we can always assume any finite collection
of quotient stacks is expressible as a quotient by the same group).
Then $\mathcal Y$ is a quotient stack if $\mathcal Y$ admits a morphism to $BG$
compatible with each $\mathcal Y_i\rightarrow\mathcal Y$.
\end{lemma}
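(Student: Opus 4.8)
The plan is to realize $\mathcal Y$ as a quotient stack by exhibiting a vector bundle $E$ on $\mathcal Y$ together with a principal $\mathrm{GL}_N$-frame bundle whose total space is an algebraic space (in fact a scheme or algebraic space of finite type), by the standard criterion: a quasi-compact quotient stack with affine stabilizers is a global quotient $[T/\mathrm{GL}_N]$ if and only if it admits a vector bundle whose associated frame bundle is an algebraic space. So the first step is to produce a vector bundle on $\mathcal Y$ that restricts to a \emph{faithful} representation on each stratum. The morphism $\mathcal Y \to BG$ hypothesized in the statement provides, by pulling back a faithful representation $G \hookrightarrow \mathrm{GL}_N$ (every affine algebraic group has one), a vector bundle $E$ on $\mathcal Y$ of rank $N$. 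Because the composite $\mathcal Y_i \hookrightarrow \mathcal Y \to BG$ is required to be compatible with the given presentation $\mathcal Y_i \cong [Y_i/G]$, the restriction $E|_{\mathcal Y_i}$ is exactly the vector bundle $[Y_i \times \mathbb A^N / G]$ associated to that faithful representation; in particular the induced action of the stabilizer of any geometric point of $\mathcal Y_i$ on the fiber of $E$ is faithful.

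Next I would form the frame bundle $\mathrm{Fr}(E) \to \mathcal Y$, a $\mathrm{GL}_N$-torsor, and argue that its total space $T := \mathrm{Fr}(E)$ is an algebraic space. This is where the stratification does the work: $T$ is stratified by the locally closed substacks $T|_{\mathcal Y_i} = \mathrm{Fr}(E|_{\mathcal Y_i})$, and over each stratum we compute $T|_{\mathcal Y_i} = \mathrm{Fr}(E|_{\mathcal Y_i}) \cong [(Y_i \times \mathrm{GL}_N)/G]$ where $G$ acts on $\mathrm{GL}_N$ through the faithful embedding $G \hookrightarrow \mathrm{GL}_N$ by left (or right) translation. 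Since $G$ acts freely on $\mathrm{GL}_N$ via a faithful embedding, this quotient is an algebraic space — indeed a scheme when $G\backslash \mathrm{GL}_N$ exists as a scheme, which it does for $G$ a closed subgroup of $\mathrm{GL}_N$. Thus $T$ has a stratification into algebraic spaces. It remains to conclude that $T$ itself is an algebraic space: a stack with a stratification into locally closed algebraic-space substrata has trivial (or at least algebraic) automorphism groups at every point, and one checks directly that $T \to \mathcal Y$ being a torsor under the \emph{algebraic space} (scheme) $\mathrm{GL}_N$ forces the stabilizers in $T$ to inject into those of $\mathcal Y$ while simultaneously the $\mathrm{GL}_N$-action being free on frames kills them; more cleanly, the diagonal of $T$ is representable and a monomorphism stratum-by-stratum hence a monomorphism, and an algebraic stack with trivial stabilizers and representable diagonal is an algebraic space.

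Having shown $T$ is an algebraic space, the final step is immediate: $\mathcal Y \cong [T/\mathrm{GL}_N]$ since $T \to \mathcal Y$ is a $\mathrm{GL}_N$-torsor with total space an algebraic space, which is precisely the definition of a quotient stack (for algebraic-space quotients; if one wants a scheme quotient one invokes the stratification once more, or simply notes that in our applications $T$ is visibly a quasi-affine scheme). The main obstacle I anticipate is the bookkeeping in the second step: verifying that "stratified into algebraic spaces" genuinely upgrades to "is an algebraic space" requires knowing the strata are not merely algebraic spaces individually but glue compatibly — this is where the hypothesis that \emph{all} strata are quotients by the \emph{same} group $G$, together with the global map to $BG$, is essential, since it guarantees the torsor $T$ is globally defined rather than assembled from incompatible local pieces. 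One should also take care that the map $\mathcal Y \to BG$ existing is not circular: it is an honest extra hypothesis, and in the intended application to $\mathcal X$ it will be supplied by hand (e.g. via the Hodge bundle or a relative Picard construction) in the lemma that follows.
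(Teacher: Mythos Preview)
Your proposal is correct and shares the essential idea with the paper's proof, but you take a slightly longer route. The paper simply forms the fiber product
\[
Y := \mathcal Y \times_{BG} \Spec\mathbbm k,
\]
which is already a principal $G$-bundle over $\mathcal Y$; the compatibility hypothesis then gives $Y|_{\mathcal Y_i} \cong Y_i$, so $Y$ is stratified by algebraic spaces and hence is one, yielding $\mathcal Y \cong [Y/G]$ directly. You instead choose a faithful embedding $G\hookrightarrow\mathrm{GL}_N$, pull back the associated vector bundle, and pass to its frame bundle $T$; this $T$ is nothing other than $Y\times^G\mathrm{GL}_N$, so your argument factors through the paper's. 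What you gain is a presentation $[\,T/\mathrm{GL}_N\,]$ by a general linear group (useful if one insists on that form of quotient stack), and you are more explicit about why a stack stratified by algebraic spaces is itself an algebraic space (trivial inertia pointwise forces the diagonal to be a monomorphism). What the paper gains is brevity: there is no need to introduce $\mathrm{GL}_N$ at all, since the universal $G$-torsor $\Spec\mathbbm k\to BG$ already does the job.
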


\begin{proof}
Consider the diagram
$$
\begin{tikzcd}
Y\arrow[r]\arrow[d] & \Spec\mathbbm k\arrow[d]\\
\mathcal Y\arrow[r] & BG
\end{tikzcd}
$$
Then $Y\rightarrow\mathcal Y$ is a principal $G$-bundle, and it suffices to show that
$Y$ is an algebraic space. But now the diagram
$$
\begin{tikzcd}
Y_i\arrow[r]\arrow[d]&Y\arrow[r]\arrow[d] & \Spec\mathbbm k\arrow[d]\\
\mathcal Y_i\arrow[r]&\mathcal Y\arrow[r] & BG
\end{tikzcd}
$$
shows that $Y$ stratifies into algebraic spaces and hence is an algebraic space itself.
\end{proof}

\begin{lemma}
The above stratification of $\mathcal X$ satisfies the conditions of Lemma \ref{quotient stack lemma}.
\end{lemma}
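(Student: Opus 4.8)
The plan is to verify the two hypotheses of Lemma \ref{quotient stack lemma} for $\mathcal Y = \mathcal X$: that $\mathcal X$ is stratified by locally closed substacks, each presented as a quotient $[Y_i/G]$ for one fixed group $G$, and that there is a morphism $\mathcal X \to BG$ whose restriction to each stratum is the classifying map of the torsor $Y_i \to \mathcal X_i$.

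For the first hypothesis, the loci $\mathcal U, \mathcal X_\iota, \mathcal X_{23}, \mathcal X_{12}, \mathcal X_{13}, \mathcal X_3$ of Definition \ref{strata definitions} form a stratification of $\mathcal X$ by locally closed substacks, and the proofs of Proposition \ref{strata Chow rings} exhibit each of them as a quotient stack for the group $G = \mathbb G_m$ acting by the Weierstrass scaling action of Theorem \ref{Weierstrass} on the relevant affine coordinates: $\mathcal U \cong [((\mathbb A^2 \setminus \Delta) \times \mathbb A^2)/\mathbb G_m]$, $\mathcal X_\iota \cong [(\mathbb A^2 \times (\mathbb A^1 \setminus 0))/\mathbb G_m]$, $\mathcal X_{12}, \mathcal X_{13} \cong [\mathbb A^3/\mathbb G_m]$, and $\mathcal X_{23}$ a $\mathbb G_m$-invariant open of $[\mathbb A^3/\mathbb G_m]$. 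For $\mathcal X_3 \cong [\mathbb A^2_{-4,-6}/\mathbb G_m] \times \mathbb P^1$ I would absorb the $\mathbb P^1$ factor into the quotient, writing $\mathcal X_3 \cong [(\mathbb A^2_{-4,-6} \times \mathbb P^1)/\mathbb G_m]$ with $\mathbb G_m$ acting trivially on $\mathbb P^1$; since the corresponding $Y_i$ are all schemes, this gives a valid common presentation. Write $Y_i \to \mathcal X_i$ for the tautological $\mathbb G_m$-torsor of each presentation.

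For the second hypothesis, I take $\mathcal X \to B\mathbb G_m$ to be the morphism classifying the Hodge line bundle $\mathbb E := \pi_*\omega_{\mathcal C/\mathcal X}$, where $\pi : \mathcal C \to \mathcal X$ is the universal curve. This is a line bundle because every geometric fibre is a connected Gorenstein curve of arithmetic genus one, so $h^0(\omega) = 1$ identically and $\pi_*\omega$ is locally free of rank one and compatible with base change; in particular the construction is functorial and defines a morphism of stacks. It then remains to see that the composite $\mathcal X_i \hookrightarrow \mathcal X \to B\mathbb G_m$ classifies the torsor $Y_i \to \mathcal X_i$, i.e. that $\mathbb E|_{\mathcal X_i}$ is the line bundle attached to $Y_i \to \mathcal X_i$ by the identity character of $\mathbb G_m$. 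This is exactly the weight computation of Note \ref{hodge bundle pullback}: pulled back to $Y_i$, the bundle $\pi_*\omega$ is trivialized by $dx/y$, on which the scaling action of $\mathbb G_m$ has weight $+1$, so $\mathbb E|_{\mathcal X_i}$ is the weight-one line bundle on $[Y_i/\mathbb G_m]$, whose frame bundle is $Y_i$ itself. On the $\mathbb P^1$ factor of $\mathcal X_3$ there is nothing to check, as $\mathbb G_m$ acts trivially and $\mathbb E|_{\mathcal X_3}$ is pulled back from the $\mgrtilde{1,1}{2}$ factor; and if the weight convention were the opposite, one would simply replace each presentation by the one for the inverted $\mathbb G_m$-action, which defines the same quotient stack.

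The step I expect to be the main obstacle is this last matching: producing a single morphism to $B\mathbb G_m$ defined on all of $\mathcal X$ that agrees, on the nose, with the six a priori unrelated local quotient presentations coming from the Weierstrass charts. The Hodge bundle is the natural choice precisely because it is the one $\mathbb G_m$-torsor available globally on $\mathcal X$ while restricting to the scaling torsor on each chart, and verifying this compatibility is what Note \ref{hodge bundle pullback} is set up to supply. With both hypotheses established this is the content of the lemma; combined with Lemma \ref{quotient stack lemma} it proves that $\mathcal X$ is a quotient stack.
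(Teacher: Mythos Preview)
Your proof is correct and follows essentially the same approach as the paper: define the global morphism $\mathcal X \to B\mathbb G_m$ via the Hodge bundle, and use Note \ref{hodge bundle pullback} to verify that its restriction to each stratum agrees with the classifying map of the $\mathbb G_m$-torsor coming from the Weierstrass presentation. You are more explicit about a few details (the $\mathbb P^1$ factor of $\mathcal X_3$, why $\pi_*\omega$ is a line bundle, and the precise torsor/weight matching), but the argument is the same.
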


\begin{proof}
By Note \ref{hodge bundle pullback} the pullback of the universal
line bundle on $B\mathbb G_m$ to each of stacks (1)-(6) above is the Hodge bundle. Letting
$\mathcal X\rightarrow B\mathbb G_m$ be the morphism determined by the Hodge bundle, we see
that the inclusion of each stratum into $\mathcal X$ is compatible with the morphisms to $B\mathbb G_m$,
since all are given by the Hodge bundle.
\end{proof}

\section{Computing $\CH(\mgbar{1,3})$}
Abusing notation, if $Z$ is a locally closed stratum from Definition \ref{strata definitions}, we will
denote the inclusion by $p:Z\rightarrow\mathcal X$. Recall from Definition \ref{locus names}
that we write $\delta_*$ for the fundamental class of $\mathcal X_*$.
\begin{proposition}
The Chow ring of $\mathcal U\cup\mathcal X_{\iota}$ is
$$
\CH(\mathcal U\cup\mathcal X_{\iota})=\mathbb Z[\lambda_1]/(6\lambda_1^2)
$$
and the first higher Chow group with $\ell$-adic coefficients is trivial.
\end{proposition}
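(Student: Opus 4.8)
The plan is to run the excision sequence for the open immersion $j\colon\mathcal U\hookrightarrow\mathcal U\cup\mathcal X_\iota$ with closed complement $i\colon\mathcal X_\iota\hookrightarrow\mathcal U\cup\mathcal X_\iota$, feeding in $\CH(\mathcal U)=\mathbb Z[\lambda_1]/(2\lambda_1)$ and $\CH(\mathcal X_\iota)=\mathbb Z[\lambda_1]/(3\lambda_1)$ from Proposition~\ref{strata Chow rings}. First I would verify the four hypotheses of the left-exactness criterion of Section~\ref{patching techniques}: both rings are finitely generated; $\CH(\mathcal X_\iota)\to\CH((\mathcal X_\iota)_{\bar{\mathbbm k}})$ is an isomorphism since $\mathcal X_\iota$ is a vector bundle over $B\mu_3$; $\CH(\mathcal U,1;\mathbb Z_\ell)=0$ for $\ell=2,3$ by Proposition~\ref{strata Chow rings}; and the only torsion of $\CH(\mathcal X_\iota)$ is $3$-torsion, covered by the vanishing at $\ell=3$. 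This produces the short exact sequence
$$
0\longrightarrow\CH(\mathcal X_\iota)\xrightarrow{\,i_*\,}\CH(\mathcal U\cup\mathcal X_\iota)\xrightarrow{\,j^*\,}\CH(\mathcal U)\longrightarrow0 .
$$

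Reading it off degree by degree gives $\mathbb Z$ in degree $0$; for $d\ge 2$ an extension of $\CH^d(\mathcal U)=\mathbb Z/2$ by $\CH^{d-1}(\mathcal X_\iota)=\mathbb Z/3$, hence $\mathbb Z/6$; and in degree $1$ an extension of $\mathbb Z/2$ by $\mathbb Z\cdot i_*(1)=\mathbb Z\,\delta_\iota$, where $\delta_\iota=[\mathcal X_\iota]$. To resolve the degree-$1$ extension I would compute $\delta_\iota$ directly in the Weierstrass presentation (Theorem~\ref{Weierstrass}, Note~\ref{hodge bundle pullback}). Writing $\mathcal U\cup\mathcal X_\iota$ as a quotient $[W/\mathbb G_m]$ with $W$ smooth, the locus $\mathcal X_\iota=\{p_3=\iota(p_2)\}$ is, on any chart on which $p_3$ has nonvanishing $y$-coordinate, the zero locus of the single $\mathbb G_m$-equivariant function $x_2-x_3$, which has weight $-2$; hence $\mathcal X_\iota$ is an effective Cartier divisor with $\mathcal O(\mathcal X_\iota)$ of weight $2$, so that $\delta_\iota=2\lambda_1$ and $i^*\delta_\iota=c_1(\mathcal N_{\mathcal X_\iota})=2\lambda_1$ (the cotangent direction at $p_3$ has weight $-2$). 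Since $\delta_\iota=2\lambda_1$ already lies in $\mathbb Z\lambda_1$ and $j^*\lambda_1$ generates $\CH^1(\mathcal U)=\mathbb Z/2$, the degree-$1$ extension is the nonsplit one and $\CH^1(\mathcal U\cup\mathcal X_\iota)=\mathbb Z\lambda_1$ is torsion-free.

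It remains to assemble the ring. By the projection formula $\delta_\iota\lambda_1=i_*(i^*\lambda_1)=i_*(\lambda_1)$, so $6\lambda_1^2=3\lambda_1\cdot\delta_\iota=i_*(3\lambda_1)=0$ because $3\lambda_1=0$ in $\CH(\mathcal X_\iota)$; thus there is a ring homomorphism $\mathbb Z[\lambda_1]/(6\lambda_1^2)\to\CH(\mathcal U\cup\mathcal X_\iota)$. It is surjective because $\CH(\mathcal U\cup\mathcal X_\iota)$ is generated by $\lambda_1$ in every degree — for $d\ge 2$ the class $\lambda_1^d$ maps to a generator of both $\CH^d(\mathcal U)=\mathbb Z/2$ and $\CH^d(\mathcal X_\iota)=\mathbb Z/3$ and so has order $6$ — and it is an isomorphism in each degree by matching the count $\mathbb Z,\mathbb Z,\mathbb Z/6,\mathbb Z/6,\dots$ from the exact sequence. (Alternatively this can be packaged through Lemma~\ref{patching lemma} and Lemma~\ref{ideal pushforward}: $c_1(\mathcal N)=2\lambda_1=-\lambda_1$ in $\CH(\mathcal X_\iota)$, so $\Ann(c_1(\mathcal N))=(3)$ and $i_*(\Ann(c_1(\mathcal N)))=(6\lambda_1)$, the fiber product $R$ is $\mathbb Z[\lambda_1]/(6\lambda_1)$, and combining $\CH(\mathcal U\cup\mathcal X_\iota)/(6\lambda_1)\cong R$ with $\CH^1=\mathbb Z\lambda_1$ and $6\lambda_1^2=0$ yields the same ring.) Finally, $\CH(\mathcal U\cup\mathcal X_\iota,1;\mathbb Z_\ell)=0$ follows from Lemma~\ref{higher Chow patching}(1): its hypotheses hold as above, and $\CH(\mathcal X_\iota,1;\mathbb Z_\ell)=\CH(\mathcal U,1;\mathbb Z_\ell)=0$ for $\ell=2,3$ by Proposition~\ref{strata Chow rings}. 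The step I expect to be the main obstacle is the precise identification $\delta_\iota=2\lambda_1$, equivalently the torsion-freeness of $\CH^1$: the purely homological data only pin $\delta_\iota$ down up to an even multiple congruent to $2\lambda_1$ modulo $3\lambda_1$, so one is forced to descend to the explicit $\mathbb G_m$-quotient presentation and keep careful track of weights; everything else is bookkeeping with the exact sequence.
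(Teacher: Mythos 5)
Your proof is correct, but it is a genuinely different route from the paper's, which for this proposition simply cites \cite[Lemma 4.9]{Bis23} and gives no argument at all; you have instead produced a self-contained computation using the machinery the paper sets up in Section \ref{patching techniques}. The hypotheses of the left-exactness criterion are verified correctly (the only torsion in $\CH(\mathcal X_\iota)$ is $3$-torsion, and $\CH(\mathcal U,1;\mathbb Z_3)=0$), the degree-by-degree analysis of the resulting short exact sequence is right, and you correctly identify the one genuinely nontrivial point: resolving the degree-one extension, equivalently computing $\delta_\iota=i_*(1)$. Your computation of that class is sound, with two small caveats worth recording. First, the identification $V(x_2-x_3)=\mathcal X_\iota$ uses not just $y_3\neq 0$ but also that the diagonal $p_2=p_3$ is excluded from $\mathcal U\cup\mathcal X_\iota$ (on the Weierstrass model, $x_2=x_3$ forces $y_3=\pm y_2$, and you must discard the branch $y_3=y_2$); this exclusion is confirmed by the paper's own chart for $\mathcal U$ in the proof of Proposition \ref{strata Chow rings}(1), so your claim holds, but the justification should invoke it explicitly rather than only the nonvanishing of the $y$-coordinate. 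Second, under the paper's weight conventions the class of the vanishing locus of a weight $-2$ function could come out as $-2\lambda_1$ rather than $+2\lambda_1$; the sign is immaterial since only the subgroup $2\mathbb Z\lambda_1\subset\CH^1$ matters for the extension, and your subsequent relation $6\lambda_1^2=3\lambda_1\cdot\delta_\iota=i_*(3\lambda_1)=0$ is insensitive to it. The surjectivity argument in degrees $d\geq 2$ is also fine once unpacked: $i^*\lambda_1^d$ generating $\mathbb Z/3$ rules out $\lambda_1^d$ being the order-two element of $\mathbb Z/6$, and $j^*\lambda_1^d\neq 0$ rules out the even elements, so $\lambda_1^d$ generates. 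What your approach buys is transparency and independence from the external reference; what the citation buys the author is brevity. The higher Chow vanishing via Lemma \ref{higher Chow patching}(1) is exactly what the paper's framework intends.
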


\begin{proof}
See \cite[Lemma 4.9]{Bis23}.
\end{proof}

\begin{proposition}\label{step 3}
The Chow ring of $\mathcal U\cup\mathcal X_{\iota}\cup\mathcal X_{23}$ is
$$
\CH(\mathcal U\cup\mathcal X_{\iota}\cup\mathcal X_{23})=
\frac{\mathbb Z[\lambda_1,\delta_{23}]}{6\lambda_1(\lambda_1-\delta_{23}), \delta_{23}(\delta_{23}+\lambda_1)},
$$
and the first higher Chow group with $\ell$-adic coefficients is zero.
\end{proposition}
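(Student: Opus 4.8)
The plan is to compute $\CH(\mathcal U\cup\mathcal X_\iota\cup\mathcal X_{23})$ by applying the excision/patching machinery to the closed immersion $p:\mathcal X_{23}\hookrightarrow \mathcal U\cup\mathcal X_\iota\cup\mathcal X_{23}=:W$ with complement $\mathcal U\cup\mathcal X_\iota$. By Proposition \ref{strata Chow rings}(3) we have $\CH(\mathcal X_{23})=\mathbb Z[\lambda_1]/(12\lambda_1^2)$ with $c_1(\mathcal N)=-\lambda_1$, and by the preceding proposition $\CH(\mathcal U\cup\mathcal X_\iota)=\mathbb Z[\lambda_1]/(6\lambda_1^2)$. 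First I would check that the codimension of $\mathcal X_{23}$ in $W$ is $1$, so that $c_{\mathrm{top}}(\mathcal N)=c_1(\mathcal N)=-\lambda_1$, and then try to apply the Porism of Lemma 3.4 (Lemma \ref{patching lemma}). This requires understanding $\Ann(-\lambda_1)$ inside $\CH(\mathcal X_{23})=\mathbb Z[\lambda_1]/(12\lambda_1^2)$: since $\lambda_1\cdot\lambda_1=\lambda_1^2$ and $12\lambda_1^2=0$, the annihilator of $\lambda_1$ is $(12\lambda_1)$, so $i_*(\Ann(c_1(\mathcal N)))=\delta_{23}\cdot(12\lambda_1)=(12\lambda_1\delta_{23})$ inside $\CH(W)$. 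Thus the porism computes $\CH(W)/(12\lambda_1\delta_{23})$ as the fiber product $R$, and I will then need to separately pin down the submodule $(12\lambda_1\delta_{23})$ to recover $\CH(W)$ itself.

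Next I would compute the fiber product $R$ explicitly. The bottom-right corner is $\CH(\mathcal X_{23})/(c_1(\mathcal N))=\mathbb Z[\lambda_1]/(12\lambda_1^2,\lambda_1)=\mathbb Z$. The map $q:\CH(\mathcal X_{23})\to\mathbb Z$ is reduction mod $\lambda_1$, and the map $p:\CH(\mathcal U\cup\mathcal X_\iota)=\mathbb Z[\lambda_1]/(6\lambda_1^2)\to\mathbb Z$ is also reduction mod $\lambda_1$ (it sends a class to the degree-zero part, using that $p$ factors through $\lambda_1\mapsto 0$ on the level of the well-defined map described in Lemma \ref{patching lemma}). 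So $R=\CH(\mathcal U\cup\mathcal X_\iota)\times_{\mathbb Z}\CH(\mathcal X_{23})$, which as an abelian group is generated by $\lambda_1$ (pulled from $\mathcal U\cup\mathcal X_\iota$) and $\delta_{23}$ (the class $(0,\lambda_1)$, i.e. the pushforward of $1$ from $\mathcal X_{23}$, whose restriction to the open is $0$ and to $\mathcal X_{23}$ is... one must be slightly careful: $i^*\delta_{23}=i^*i_*1=c_1(\mathcal N)=-\lambda_1$). I would then read off the multiplicative structure: $\lambda_1$ has the relation $6\lambda_1(\lambda_1-\delta_{23})=0$, coming from $6\lambda_1^2=0$ on the open together with the fact that $\lambda_1\delta_{23}$ restricts to $-\lambda_1^2$ on the closed stratum and to $0$ on the open; and $\delta_{23}(\delta_{23}+\lambda_1)=0$, coming from the self-intersection formula $\delta_{23}^2=i_*(i^*\delta_{23})=i_*(-\lambda_1)=-\lambda_1\delta_{23}$. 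Checking that these two relations generate the full ideal (and that there are no further relations, e.g. that $12\lambda_1^2=0$ is a consequence — indeed $12\lambda_1^2 = 12\lambda_1^2 - 2\cdot 6\lambda_1(\lambda_1-\delta_{23}) = 12\lambda_1\delta_{23}$, hmm) is the delicate bookkeeping step.

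The main obstacle I anticipate is precisely this last bookkeeping: separating the contribution of $i_*(\Ann(c_1(\mathcal N)))=(12\lambda_1\delta_{23})$ and confirming that $\CH(W)$, not merely its quotient, is $\mathbb Z[\lambda_1,\delta_{23}]/(6\lambda_1(\lambda_1-\delta_{23}),\delta_{23}(\delta_{23}+\lambda_1))$. Concretely, in the stated answer $12\lambda_1\delta_{23}$ must already be zero: from $\delta_{23}(\delta_{23}+\lambda_1)=0$ and $6\lambda_1(\lambda_1-\delta_{23})=0$ one gets $6\lambda_1\delta_{23}=6\lambda_1^2$ and $6\lambda_1^2$ relates back around, so I would verify that $12\lambda_1\delta_{23}=0$ holds in the candidate ring and hence the porism's quotient is already the full ring — equivalently, that $\CH(W)\to R$ is an isomorphism because the annihilator submodule we quotiented by was already trivial in $\CH(W)$. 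This amounts to an additive-rank count: the candidate ring has $\mathbb Z$ in degree $0$, $\mathbb Z\lambda_1\oplus\mathbb Z\delta_{23}$ in degree $1$, and is finite in degree $2$ and zero above, matching what excision predicts for the ranks (using that $\CH(\mathcal U\cup\mathcal X_\iota)$ and $\CH(\mathcal X_{23})$ are finite in positive degrees and the pushforward $\delta_{23}$ contributes one new rank-$1$ generator in degree $1$). Finally, the higher Chow statement follows directly from Lemma \ref{higher Chow patching}(1): $\CH(\mathcal X_{23},1;\mathbb Z_\ell)=0$ and $\CH(\mathcal U\cup\mathcal X_\iota,1;\mathbb Z_\ell)=0$ for $\ell=2,3$ by Proposition \ref{strata Chow rings} and the previous proposition, and the hypotheses (finite generation, injectivity of $\CH(\mathcal X_{23})\to\CH((\mathcal X_{23})_{\bar{\mathbbm k}})$, which holds since $\mathcal X_{23}$ is a quotient of $\mathbb A^2$ by $\mathbb G_m$ minus a divisor) are met, so $\CH(W,1;\mathbb Z_\ell)=0$.
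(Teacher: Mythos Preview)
Your setup is right, and you correctly identify that the porism only computes $\CH(W)/(12\lambda_1\delta_{23})$ rather than $\CH(W)$ itself. The gap is in how you propose to resolve this ambiguity. You assert that $12\lambda_1\delta_{23}=0$ in the target ring $\mathbb Z[\lambda_1,\delta_{23}]/(6\lambda_1(\lambda_1-\delta_{23}),\,\delta_{23}(\delta_{23}+\lambda_1))$, so that the quotient map is already an isomorphism. This is false: write a degree-two element of the ideal as $a\cdot 6\lambda_1(\lambda_1-\delta_{23})+b\cdot\delta_{23}(\delta_{23}+\lambda_1)$ and compare coefficients with $12\lambda_1\delta_{23}$; you are forced to $a=b=0$, a contradiction. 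Indeed the paper shows the opposite: since $\CH(\mathcal U\cup\mathcal X_\iota,1;\mathbb Z_\ell)=0$, the pushforward $p_*$ is injective, and $12\lambda_1\delta_{23}=p_*(12\lambda_1)$ with $12\lambda_1\neq 0$ in $\CH(\mathcal X_{23})$, so $12\lambda_1\delta_{23}\neq 0$ in $\CH(W)$.

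More seriously, the fiber product $R$ does not single out the minus sign in $6\lambda_1(\lambda_1-\delta_{23})$. Under $(j^*,p^*)$ one has $\lambda_1\mapsto(\lambda_1,\lambda_1)$ and $\delta_{23}\mapsto(0,-\lambda_1)$, so \emph{both} $6\lambda_1(\lambda_1+\delta_{23})$ and $6\lambda_1(\lambda_1-\delta_{23})$ map to $(0,0)$ in $R$; they differ by $12\lambda_1\delta_{23}$, which is exactly the ambiguity the porism leaves. Your rank count cannot distinguish them either: for every integer $a$, the ring $\mathbb Z[\lambda_1,\delta_{23}]/(\delta_{23}(\delta_{23}+\lambda_1),\,6\lambda_1(\lambda_1+\delta_{23})+12a\lambda_1\delta_{23})$ has the same graded abelian-group structure and the same quotient by $(12\lambda_1\delta_{23})$. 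The paper pins down $a=-1$ by an external geometric input you are missing: on this locus $\mathcal X_{\alpha.1},\mathcal X_{12},\mathcal X_{13},\mathcal X_3$ are all excised, so Lemma~\ref{WDVV} gives $0=4\delta_{\alpha.1}=24\lambda_1(\lambda_1-\delta_{23})$, and combined with $6\lambda_1(\lambda_1+\delta_{23})+12a\lambda_1\delta_{23}=0$ this forces $a=-1$. Without this (or an equivalent) ingredient, the argument cannot conclude.
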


\begin{proof}
We have
$$
p_*(\Ann(c_1(\mathcal N)))=p_*(\Ann(-\lambda_1))=p_*((12\lambda_1))=
p_*((12 p^*\lambda_1))=(12\lambda_1\delta_{23}),
$$
and so the following diagram is cartesian:
$$
\begin{tikzcd}
\CH(\mathcal U\cup\mathcal X_{\iota}\cup\mathcal X_{23})/(12\lambda_1\delta_{23})
\arrow[r, "p^*"]\arrow[d, "j^*"] & \mathbb Z[\lambda_1]/(12\lambda_1^2)\arrow[d]\\
\mathbb Z[\lambda_1]/(6\lambda_1^2)\arrow[r] & \mathbb Z.
\end{tikzcd}
$$
Since $\CH(\mathcal U\cup\mathcal X_{\iota}\cup\mathcal X_{23})$ surjects onto the fiber product
and is generated by $\lambda_1$ and $\delta_{23}$, to compute the fiber product it is sufficient
to see which polynomials in $\lambda_1$ and $\delta_{23}$ are in the kernel of both
$j^*$ and $p^*$.
We find the following relations on the fiber product of rings:
$$
\delta_{23}(\delta_{23}+\lambda_1)=0,\quad 6\lambda_1(\lambda_1+\delta_{23})=0,
\quad 12\lambda_1\delta_{23}=0,
$$
and so these relations hold on $\mathcal U\cup\mathcal\mathcal X_{\iota}\cup\mathcal X_{23}$
modulo $12\lambda_1\delta_{23}$.

First observe that since the first higher Chow group with $\ell$-adic coefficients of $\mathcal U\cup
\mathcal X_{\iota}$ is zero, we must have that $p_*$ is injective. Therefore, as $12\lambda_1\delta_{23}=
p_*(12\lambda_1)$, the third of the above relations does not hold. Second, observe that since
$$
\delta_{23}^2=p_*(1)p_*(1)=p_*(p^*p_*(1))=p_*(-\lambda_1)=-\lambda_1\delta_{23},
$$ we have $\delta_{23}(\delta_{23}+\lambda_1)=0$. Lastly, for the second relation,
observe that we must have
$$
6\lambda_1(\lambda_1+\delta_{23})+a\cdot12\lambda_1\delta_{23}=0
$$
for some $a$. But since $\mathcal X_{\alpha.1}$, $\mathcal X_{12}$, $\mathcal X_{13}$,
and $\mathcal X_3$ are still excised, we have (by Lemma \ref{WDVV})
$$
0=4[\mathcal X_{\alpha.1}]=4\delta_{\alpha.1}=24\lambda_1(\lambda_1+\delta_{12}+\delta_{13}-
\delta_{23}+\delta_3)=24\lambda_1(\lambda_1-\delta_{23}),
$$
which then implies that $a=-1$ and concludes the proof.
\end{proof}

The next few steps now proceed smoothly, as at each step the top Chern class of the normal bundle
is not a zero-divisor.

\begin{proposition}\label{step 4}
The Chow ring of $\mathcal U\cup\mathcal X_{\iota}\cup\mathcal X_{23}\cup\mathcal X_{12}$ is
generated by $\lambda_1,\delta_{12},\delta_{23}$, and the ideal of relations is generated by the following
elements:
$$
6\lambda_1(\lambda_1+\delta_{12}-\delta_{23}),\quad
\delta_{12}(\delta_{12}+\lambda_1),\quad
\delta_{23}(\delta_{23}+\lambda_1),\quad
\delta_{12}\delta_{23}.
$$
Moreover, the first higher Chow group with $\ell$-adic coefficients is trivial.
\end{proposition}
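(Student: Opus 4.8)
The plan is to proceed exactly as in the proof of Proposition \ref{step 3}, using the porism of the patching lemma (Lemma \ref{patching lemma}) applied to the closed immersion $p:\mathcal X_{12}\hookrightarrow \mathcal U\cup\mathcal X_{\iota}\cup\mathcal X_{23}\cup\mathcal X_{12}$ with complement $\mathcal U\cup\mathcal X_{\iota}\cup\mathcal X_{23}$, together with part (1) of Lemma \ref{higher Chow patching} to track the vanishing of the first higher Chow group. First I would record the input data: by Proposition \ref{strata Chow rings}(4) we have $\CH(\mathcal X_{12})=\mathbb Z[\lambda_1]$ with $c_1(\mathcal N)=-\lambda_1$, and by Proposition \ref{step 3} the complement has Chow ring $\mathbb Z[\lambda_1,\delta_{23}]/(6\lambda_1(\lambda_1-\delta_{23}),\delta_{23}(\delta_{23}+\lambda_1))$ with trivial first higher Chow group with $\ell$-adic coefficients. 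Since $\lambda_1$ is a non-zero-divisor in $\mathbb Z[\lambda_1]$, we have $\Ann(c_1(\mathcal N))=0$, so the porism gives an honest isomorphism $\CH(\mathcal U\cup\mathcal X_{\iota}\cup\mathcal X_{23}\cup\mathcal X_{12})\xrightarrow{\sim}R$ where $R$ is the fiber product of $\CH(\mathcal X_{12})=\mathbb Z[\lambda_1]$ and $\CH(\mathcal U\cup\mathcal X_{\iota}\cup\mathcal X_{23})$ over $\CH(\mathcal X_{12})/(c_1(\mathcal N))=\mathbb Z[\lambda_1]/(\lambda_1)=\mathbb Z$; here I should note the map $\CH(\mathcal X_{12})\to\mathbb Z$ is reduction mod $\lambda_1$ and the map from the complement sends $\lambda_1\mapsto\lambda_1$, $\delta_{23}\mapsto 0$ (since $\mathcal X_{23}$ and $\mathcal X_{12}$ are disjoint in the open stratum picture, the restriction of $\delta_{23}$ to the normal-bundle quotient of $\mathcal X_{12}$ vanishes).

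Next I would compute the fiber product explicitly. The stack $\CH(\mathcal U\cup\mathcal X_{\iota}\cup\mathcal X_{23}\cup\mathcal X_{12})$ surjects onto $R$ and is generated by $\lambda_1$, $\delta_{23}$, and $\delta_{12}=p_*(1)$, so as in Proposition \ref{step 3} it suffices to identify which polynomials in these three classes lie in the kernel of both $j^*$ (restriction to the complement) and $p^*$ (restriction to $\mathcal X_{12}$). The relations $\delta_{23}(\delta_{23}+\lambda_1)=0$ and $\delta_{12}\delta_{23}=0$ follow from the complement's relations and from $\mathcal X_{12}\cap\mathcal X_{23}=\emptyset$ respectively; the relation $\delta_{12}(\delta_{12}+\lambda_1)=0$ follows from the self-intersection formula $\delta_{12}^2=p_*(p^*p_*(1))=p_*(c_1(\mathcal N))=p_*(-\lambda_1)=-\lambda_1\delta_{12}$. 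For the relation $6\lambda_1(\lambda_1+\delta_{12}-\delta_{23})=0$, I would start from the known relation $6\lambda_1(\lambda_1-\delta_{23})=0$ on the complement, lift it to the new ring modulo an a priori correction term in $\ker(j^*)=(\delta_{12})$, and pin down the coefficient using the WDVV-type relation $4\delta_{\alpha.1}=24\lambda_1(\lambda_1+\delta_{12}+\delta_{13}-\delta_{23}+\delta_3)$ (with $\delta_{13},\delta_3$ still excised, hence zero), exactly mirroring the argument in Proposition \ref{step 3}.

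Finally, for the higher Chow statement, I would apply Lemma \ref{higher Chow patching}(1): $\CH(\mathcal X_{12},1;\mathbb Z_\ell)=0$ by Proposition \ref{strata Chow rings}, $\CH(\mathcal U\cup\mathcal X_{\iota}\cup\mathcal X_{23},1;\mathbb Z_\ell)=0$ by Proposition \ref{step 3}, and $\CH(\mathcal X_{12})=\mathbb Z[\lambda_1]\to\CH((\mathcal X_{12})_{\bar{\mathbbm k}})$ is injective since $\mathbb Z[\lambda_1]$ is torsion-free and the base change is the identity on the polynomial generator; all Chow rings involved are finitely generated, so the hypotheses of the lemma are met and $\CH(\mathcal U\cup\mathcal X_{\iota}\cup\mathcal X_{23}\cup\mathcal X_{12},1;\mathbb Z_\ell)=0$ for $\ell=2,3$. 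The main obstacle is the bookkeeping in the fiber-product computation — specifically, verifying that the four listed relations generate the \emph{entire} kernel rather than just lie in it, which requires checking that the presented ring already surjects onto $R$ with no further relations needed; the WDVV input is essential here to rule out a spurious $6\lambda_1\delta_{12}$-type relation and to fix the sign/coefficient pattern, so I would make sure Lemma \ref{WDVV} is stated and available before this point.
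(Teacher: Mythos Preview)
Your overall structure matches the paper's: apply Lemma \ref{patching lemma} to the closed immersion of $\mathcal X_{12}$, note that $\Ann(c_1(\mathcal N))=0$ so the Chow ring \emph{equals} the fiber product, and then determine the relations as the common kernel of $j^*$ and $p^*$. The higher Chow argument is also the same.

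The one misstep is your treatment of the relation $6\lambda_1(\lambda_1+\delta_{12}-\delta_{23})=0$. You propose to pin down the $\delta_{12}$-coefficient via WDVV, ``exactly mirroring'' Proposition \ref{step 3}, and you call the WDVV input ``essential.'' It is neither needed nor sufficient here. In Proposition \ref{step 3} WDVV was required only because $\Ann(c_1(\mathcal N))=(12\lambda_1)\neq 0$ in $\CH(\mathcal X_{23})$, so the porism left a genuine ambiguity modulo $12\lambda_1\delta_{23}$ that had to be resolved externally. In the present step $\Ann=0$, so there is no ambiguity: the fiber product alone determines the relation. Concretely, starting from $f'=6\lambda_1(\lambda_1-\delta_{23})$ with $j^*f'=0$, one applies $p^*$ (using $p^*\delta_{12}=-\lambda_1$, $p^*\delta_{23}=0$) to get
\[
0=p^*\bigl(f'+\delta_{12}g\bigr)=6\lambda_1^2-\lambda_1\,g(\lambda_1,-\lambda_1,0),
\]
and since $\lambda_1$ is not a zero-divisor in $\mathbb Z[\lambda_1]$ this forces $g=6\lambda_1$ up to $\ker p^*$. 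That is precisely the paper's argument. By contrast, WDVV with $\delta_{\alpha.1},\delta_{13},\delta_3$ excised only yields $24\lambda_1(\lambda_1+\delta_{12}-\delta_{23})=0$, which is four times weaker than what you need and cannot by itself fix the coefficient. Drop the WDVV appeal here and use $p^*$ directly; the remaining relations $\delta_{12}\delta_{23}$ and $\delta_{12}(\delta_{12}+\lambda_1)$ then arise, as in the paper, from the generators $\delta_{23}$ and $\delta_{12}+\lambda_1$ of $\ker p^*$.
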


\begin{proof}
Since $c_1(\mathcal N)=-\lambda_1$ is not a zero-divisor on $\CH(\mathcal X_{12})=\mathbb Z[\lambda_1]$,
Lemma \ref{patching lemma} implies that
$$
\begin{tikzcd}
\CH(\mathcal U\cup\mathcal X_{\iota}\cup\mathcal X_{23}\cup\mathcal X_{12})
\arrow[r, "p^*"]\arrow[d, "j^*"] & \mathbb Z[\lambda_1]\arrow[d]\\
\CH(\mathcal U\cup\mathcal X_{\iota}\cup\mathcal X_{23})
\arrow[r] & \mathbb Z
\end{tikzcd}
$$
is cartesian. As before, the relations in the fiber product are the intersection of the kernels of $p^*$ and $j^*$.
So assume that $f(\lambda_1,\delta_{12},\delta_{23})$ is a relation. Then $j^*f=0$ implies that
$f$ is of the form
$$f=f'(\lambda_1,\delta_{23})+\delta_{12}g(\lambda_1,\delta_{12},\delta_{23})
$$
where $f'$ is a relation on $\mathcal U\cup\mathcal X_{\iota}\cup\mathcal X_{23}$. We check both of
the relations found in Proposition \ref{step 3}, applying $p^*$:
$$
0=p^*f=p^*(6\lambda_1(\lambda_1-\delta_{23})+\delta_{12}g(\lambda_1,\delta_{12},\delta_{23}))=
6\lambda_1^2-\lambda_1g(\lambda_1,-\lambda_1,0).
$$
Since $\lambda_1$ isn't a zero-divisor in $\CH(\mathcal X_{12})=\mathbb Z[\lambda_1]$, we may cancel a
$\lambda_1$ and obtain $g(\lambda_1,-\lambda_1,0)=6\lambda_1^2$, which implies that
$g=6\lambda_1$, up to an element of the kernel of $p^*$. This gives us our first relation:
$$
6\lambda_1(\lambda_1-\delta_{23})+\delta_{12}\cdot 6\lambda_1=6\lambda_1(\lambda_1+\delta_{12}-
\delta_{23})=0.
$$
Similar reasoning applied to the other relation from Proposition \ref{step 3} gives:
$$
\delta_{23}(\delta_{23}+\lambda_1)=0.
$$
We now need to check elements from the kernel of $p^*$, as if $p^*\alpha=0$, then
$\delta_{12}\alpha$ is in the kernel of both $p^*$ and $j^*$. So suppose $p^*f''(\lambda_1,\delta_{12},
\delta_{23})=0$. Then
$$
0=p^*f''=f(\lambda_1,-\lambda_1,0),
$$
and so generators for such polynomials are $f''=\delta_{23}$ and $f''=\delta_{12}+\lambda_1$,
which gives two additional relations:
$$
\delta_{12}\delta_{23}=0,\quad \delta_{12}(\delta_{12}+\lambda_1)=0.
$$
\end{proof}

\begin{proposition}\label{step 5}
The Chow ring of $\mathcal U\cup\mathcal X_{\iota}\cup\mathcal X_{23}\cup\mathcal X_{12}\cup
\mathcal X_{13}$ is
generated by $\lambda_1,\delta_{12},\delta_{13},\delta_{23}$,
and the ideal of relations is generated by the following elements:
$$
\begin{aligned}
&\delta_{12}(\delta_{12}+\lambda_1),\quad
\delta_{13}(\delta_{13}+\lambda_1),\quad
\delta_{23}(\delta_{23}+\lambda_1),\\
&\delta_{12}\delta_{13},\quad
\delta_{12}\delta_{23},\quad
\delta_{13}\delta_{23},\\
&6\lambda_1(\lambda_1+\delta_{12}+\delta_{13}-\delta_{23}),
\end{aligned}
$$
and the first higher Chow group with $\ell$-adic coefficients is trivial.
\end{proposition}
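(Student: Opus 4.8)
The plan is to imitate the proof of Proposition \ref{step 4}, patching in the stratum $\mathcal X_{13}$ along the inclusion $p:\mathcal X_{13}\hookrightarrow \mathcal U\cup\mathcal X_{\iota}\cup\mathcal X_{23}\cup\mathcal X_{12}\cup\mathcal X_{13}$. Since $\CH(\mathcal X_{13})=\mathbb Z[\lambda_1]$ by Proposition \ref{strata Chow rings}(5), and the top Chern class of the normal bundle is $c_1(\mathcal N)=-\lambda_1$, which is not a zero-divisor in $\mathbb Z[\lambda_1]$, Lemma \ref{patching lemma} applies with trivial annihilator ideal and exhibits $\CH(\mathcal U\cup\mathcal X_{\iota}\cup\mathcal X_{23}\cup\mathcal X_{12}\cup\mathcal X_{13})$ as the honest fiber product
$$
\begin{tikzcd}
\CH(\mathcal U\cup\mathcal X_{\iota}\cup\mathcal X_{23}\cup\mathcal X_{12}\cup\mathcal X_{13})
\arrow[r, "p^*"]\arrow[d, "j^*"] & \mathbb Z[\lambda_1]\arrow[d]\\
\CH(\mathcal U\cup\mathcal X_{\iota}\cup\mathcal X_{23}\cup\mathcal X_{12})
\arrow[r] & \mathbb Z,
\end{tikzcd}
$$
where the bottom map is reduction mod $\lambda_1$ (from Proposition \ref{step 4}) and the right map is also reduction mod $\lambda_1$.

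As in the previous step, the ring of relations on the fiber product is the intersection of the kernels of $p^*$ and $j^*$, so I would determine generators for this intersection. First, writing a relation $f(\lambda_1,\delta_{12},\delta_{13},\delta_{23})$ with $j^*f=0$ forces $f = f'(\lambda_1,\delta_{12},\delta_{23}) + \delta_{13}\,g(\lambda_1,\delta_{12},\delta_{13},\delta_{23})$ with $f'$ a relation from Proposition \ref{step 4}. I would lift each of the four generating relations of Proposition \ref{step 4} by correcting with a $\delta_{13}$-multiple so that $p^*$ also kills it: applying $p^*$ (i.e. $\delta_{12},\delta_{23}\mapsto 0$, $\delta_{13}\mapsto -\lambda_1$) to $6\lambda_1(\lambda_1+\delta_{12}-\delta_{23})$ gives $6\lambda_1^2$, forcing the correction term to be $\delta_{13}\cdot 6\lambda_1$, which yields the relation $6\lambda_1(\lambda_1+\delta_{12}+\delta_{13}-\delta_{23})=0$; the relations $\delta_{12}(\delta_{12}+\lambda_1)$, $\delta_{23}(\delta_{23}+\lambda_1)$, and $\delta_{12}\delta_{23}$ are already in $\ker p^*$ and lift unchanged. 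Next I would compute $\ker p^*$ itself: an element pulls back to zero iff it vanishes under $\delta_{12},\delta_{23}\mapsto 0$, $\delta_{13}\mapsto -\lambda_1$, so $\ker p^*$ is generated by $\delta_{12}$, $\delta_{23}$, and $\delta_{13}+\lambda_1$; multiplying each by $\delta_{13}$ produces the relations $\delta_{12}\delta_{13}=0$, $\delta_{13}\delta_{23}=0$, and $\delta_{13}(\delta_{13}+\lambda_1)=0$ (the last also following directly from $\delta_{13}^2 = p_*(1)p_*(1) = p_*(p^*p_*1) = p_*(-\lambda_1) = -\lambda_1\delta_{13}$, the self-intersection formula). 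Collecting these gives exactly the seven listed generators, and I would check that the resulting presentation surjects onto and has the same Hilbert series as the fiber product to conclude no further relations are needed.

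The higher Chow group statement follows from part (1) of Lemma \ref{higher Chow patching}: the Chow ring of $\mathcal X_{13}$ has trivial first higher Chow group with $\ell$-adic coefficients by Proposition \ref{strata Chow rings}, the Chow ring of $\mathcal U\cup\mathcal X_{\iota}\cup\mathcal X_{23}\cup\mathcal X_{12}$ has trivial first higher Chow group by Proposition \ref{step 4}, and $\CH(\mathcal X_{13})\to\CH((\mathcal X_{13})_{\bar{\mathbbm k}})$ is injective since $\mathbb Z[\lambda_1]$ is already defined over the base; hence the first higher Chow group of the union vanishes. I expect the only real bookkeeping obstacle to be the verification that the seven relations generate the full relation ideal — that is, that the candidate presentation is not merely a quotient of $\CH(\mathcal U\cup\cdots\cup\mathcal X_{13})$ but isomorphic to it. This should follow by the same degree-by-degree argument as in Proposition \ref{step 4}, using that $\lambda_1$ is a non-zero-divisor in $\CH(\mathcal X_{13})$ to cancel $\lambda_1$ when solving for the correction polynomials $g$, so that no ambiguity survives beyond elements of $\ker p^*$, which have already been accounted for.
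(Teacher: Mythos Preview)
Your proposal is correct and follows exactly the approach the paper intends: the paper's own proof simply notes that $c_1(\mathcal N)=-\lambda_1$ is a non-zero-divisor in $\CH(\mathcal X_{13})=\mathbb Z[\lambda_1]$, invokes Lemma \ref{patching lemma}, and then omits the computation with the remark that it ``follows exactly as it does in Proposition \ref{step 4}.'' Your detailed execution of that omitted computation---lifting each relation from Proposition \ref{step 4} via the correction term $\delta_{13}g$, then adding $\delta_{13}\cdot\ker p^*$---is precisely what the paper has in mind, and your higher Chow argument via Lemma \ref{higher Chow patching}(1) is likewise the intended one.
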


\begin{proof}
Once again we have $c_1(\mathcal N)=-\lambda_1$, which is not a zero-divisor in
$\CH(\mathcal X_{13})=\mathbb Z[\lambda_1]$. Therefore we may apply Lemma \ref{patching lemma} to
directly compute the Chow ring.
This computation follows exactly as it does in Proposition
\ref{step 4}, so we omit it.
\end{proof}

\begin{proposition}\label{step 6}
The Chow ring of $\mathcal X$ is
generated by $\lambda_1,\delta_{12},\delta_{13},\delta_{23},\delta_3$,
and the ideal of relations is generated by the following elements:
$$
\begin{aligned}
&\delta_{12}(\delta_{12}+\delta_3+\lambda_1),\quad
\delta_{13}(\delta_{13}+\delta_3+\lambda_1),\quad
\delta_{23}(\delta_{23}+\delta_3+\lambda_1),\\
&\delta_3(\delta_3+\delta_{12}+\lambda_1),\\
&\delta_3(\delta_{12}-\delta_{13}),\quad
\delta_3(\delta_{12}-\delta_{23}),\\
&\delta_{12}\delta_{13},\quad
\delta_{12}\delta_{23},\quad
\delta_{13}\delta_{23},\\
&6\lambda_1(\lambda_1+\delta_{12}+\delta_{13}-\delta_{23}+\delta_3),
\end{aligned}
$$
and the first higher Chow group with $\ell$-adic coefficients vanishes.
\end{proposition}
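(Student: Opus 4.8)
The plan is to apply Lemma~\ref{patching lemma} to the closed immersion $p\colon \mathcal X_3\hookrightarrow\mathcal X$, whose complement is $\mathcal U\cup\mathcal X_\iota\cup\mathcal X_{23}\cup\mathcal X_{12}\cup\mathcal X_{13}$ (the stack of Proposition~\ref{step 5}). By Proposition~\ref{strata Chow rings}(6) we have $\CH(\mathcal X_3)=\mathbb Z[\lambda_1,x]/(x^2)$ and $c_1(\mathcal N)=-\lambda_1-x$, and this class is a non-zero-divisor: if $(\lambda_1+x)(f+gx)=\lambda_1 f+(f+\lambda_1 g)x=0$ with $f,g\in\mathbb Z[\lambda_1]$ then $f=0$ and hence $g=0$. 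Thus $\Ann(c_1(\mathcal N))=0$, so Lemma~\ref{patching lemma} identifies $\CH(\mathcal X)$ with the fiber product of $\CH(\mathcal U\cup\cdots\cup\mathcal X_{13})$ and $\mathbb Z[\lambda_1,x]/(x^2)$ over $\CH(\mathcal X_3)/(c_1(\mathcal N))=\mathbb Z[\lambda_1,x]/(x^2,\lambda_1+x)\cong\mathbb Z[\lambda_1]/(\lambda_1^2)$.

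Next I would determine the two maps into $\mathbb Z[\lambda_1]/(\lambda_1^2)$. The map induced by $j^*$ is the ring of Proposition~\ref{step 5} with $\delta_3\mapsto 0$ and $\lambda_1,\delta_{ij}$ sent to themselves. For $i^*=p^*$: by Note~\ref{hodge bundle pullback}, $\lambda_1\mapsto\lambda_1$; since $\mathcal X_3$ is the image of the gluing morphism, $\delta_3=i_*(1)\mapsto c_1(\mathcal N)=-\lambda_1-x$; and the essential geometric input is that under $\mathcal X_3\cong\mgrtilde{1,1}{2}\times\mgbar{0,4}$ the closure of $\mathcal X_{ij}$ in $\mathcal X$ meets $\mathcal X_3$ with multiplicity one along $\mgrtilde{1,1}{2}$ times the boundary point of $\mgbar{0,4}\cong\mathbb P^1$ separating $\{p_i,p_j\}$ from the node and the remaining marked point. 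Since the three boundary points of $\mgbar{0,4}$ are rationally equivalent to the point class $x$, this gives $i^*\delta_{12}=i^*\delta_{13}=i^*\delta_{23}=x$, so all three map to $-\lambda_1$ in $\mathbb Z[\lambda_1]/(\lambda_1^2)$; note also that the map $q\colon\mathbb Z[\lambda_1,x]/(x^2)\to\mathbb Z[\lambda_1]/(\lambda_1^2)$ has kernel $(\lambda_1+x)=(-c_1(\mathcal N))$, the ideal generated by the image of $\delta_3$ in the second factor.

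With the fiber product description in place the rest is formal. Generation by $\lambda_1,\delta_{12},\delta_{13},\delta_{23},\delta_3$ follows from a diagram chase: $\CH(\mathcal U\cup\cdots\cup\mathcal X_{13})\to\mathbb Z[\lambda_1]/(\lambda_1^2)$ is surjective via $\lambda_1$, so one reduces to pairs $(0,(\lambda_1+x)h)$, and these are realized as $\mathbb Z[\lambda_1]$-combinations of $\delta_3$ and $\delta_3\delta_{12}$ using $(\lambda_1+x)x=\lambda_1 x$. For the relations, $F(\lambda_1,\delta_{12},\delta_{13},\delta_{23},\delta_3)$ vanishes in $\CH(\mathcal X)$ iff $j^*F=0$ and $i^*F=0$; the first condition says $F|_{\delta_3=0}$ lies in the relation ideal of Proposition~\ref{step 5}, so $F$ equals a combination of those seven generators plus a multiple of $\delta_3$. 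For each of the seven I would lift it verbatim and add the unique correction $\delta_3\cdot G$ needed to also kill $i^*$ --- unique because $\lambda_1+x$ is a non-zero-divisor --- which replaces $\delta_{ij}(\delta_{ij}+\lambda_1)$ by $\delta_{ij}(\delta_{ij}+\delta_3+\lambda_1)$, leaves $\delta_{ij}\delta_{ik}$ unchanged, and replaces $6\lambda_1(\lambda_1+\delta_{12}+\delta_{13}-\delta_{23})$ by $6\lambda_1(\lambda_1+\delta_{12}+\delta_{13}-\delta_{23}+\delta_3)$. What remains are relations of the form $\delta_3 G$ with $i^*G=0$; since $\ker(i^*)$ is generated by $\delta_{12}-\delta_{13}$, $\delta_{12}-\delta_{23}$, $\delta_3+\delta_{12}+\lambda_1$, and $\delta_{12}^2$, these give $\delta_3(\delta_{12}-\delta_{13})$, $\delta_3(\delta_{12}-\delta_{23})$, $\delta_3(\delta_3+\delta_{12}+\lambda_1)$, and $\delta_3\delta_{12}^2$, the last being redundant since $\delta_3\delta_{12}^2=(\delta_3\delta_{12})\delta_{12}=(\delta_3\delta_{13})\delta_{12}=\delta_3(\delta_{12}\delta_{13})=0$. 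This is exactly the stated list of ten relations.

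Finally, $\CH(\mathcal X,1;\mathbb Z_\ell)=0$ follows from Lemma~\ref{higher Chow patching}(1) applied to $\mathcal X_3\hookrightarrow\mathcal X$: $\CH(\mathcal X_3,1;\mathbb Z_\ell)=0$ by Proposition~\ref{strata Chow rings}, $\CH(\mathcal U\cup\cdots\cup\mathcal X_{13},1;\mathbb Z_\ell)=0$ by Proposition~\ref{step 5}, and $\CH(\mathcal X_3)\to\CH((\mathcal X_3)_{\bar{\mathbbm k}})$ is an isomorphism because $\mathcal X_3\cong[\mathbb A^2/\mathbb G_m]\times\mathbb P^1$. The main obstacle is the geometric input in the second step: one must verify precisely how the one-node boundary divisors $\overline{\mathcal X_{12}},\overline{\mathcal X_{13}},\overline{\mathcal X_{23}}$ degenerate onto the glued locus $\mathcal X_3$ and that each intersection is reduced of multiplicity one, since an incorrect coefficient there would distort every relation involving $\delta_3$. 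Once those pullbacks are known, the generation and relation bookkeeping through the fiber product is routine linear algebra over $\mathbb Z[\lambda_1,x]/(x^2)$.
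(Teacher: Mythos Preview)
Your proposal is correct and follows essentially the same approach as the paper: apply Lemma~\ref{patching lemma} to $\mathcal X_3\hookrightarrow\mathcal X$, check that $c_1(\mathcal N)=-\lambda_1-x$ is a non-zero-divisor, identify the pullbacks $p^*(\delta_{ij})=x$ and $p^*(\delta_3)=-\lambda_1-x$, and then read off generators and relations from the fiber product. In fact you supply more detail than the paper does---the paper works out only the relation $6\lambda_1(\lambda_1+\delta_{12}+\delta_{13}-\delta_{23}+\delta_3)$ explicitly and leaves the rest to the reader---and you correctly flag the geometric verification of $p^*(\delta_{ij})=x$ as the one substantive input.
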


\begin{proof}
Since $\mathcal X_3$ is the image of the gluing
morphism $\mgrtilde{1,1}{2}\times\mgbar{0,4}$, its normal bundle is 
$$
c_1(\mathcal N)=-\lambda_1-x\in
\CH(\mgrtilde{1,1}{2}\times\mgbar{0,4})=\mathbb Z[\lambda_1,x]/(x^2),
$$
which is not a zero-divisor. Therefore we can once again use Lemma \ref{patching lemma} to directly
compute the Chow ring. We show a step of the computation here, and then leave the rest to the reader.

We first need to analyze the pullback homomorphism to $\CH(\mathcal X_3)$. One sees the following:
$$
p^*(\lambda_1)=\lambda_1,\quad p^*(\delta_{12})=p^*(\delta_{13})=p^*(\delta_{23})=x,\quad
p^*(\delta_3)=c_1(\mathcal N)=-\lambda_1-x.
$$
Therefore $p^*$ is surjective, and so by Lemma \ref{ideal pushforward}
the image of $p_*$ is the ideal generated by $\delta_3$, implying
that $\CH(\mathcal X)$ is generated by $\lambda_1,\delta_{12},\delta_{13},\delta_{23},\delta_3$.

The relations on the fiber product are then polynomials $f$ in the above variables which are in the kernel
of both $j^*$ and $p^*$. We then see:
$$
j^*(f)=0\implies f= f'(\lambda_1,\delta_{12},\delta_{13},\delta_{23})+\delta_3g(\lambda_1,
\delta_{12},\delta_{13},\delta_{23},\delta_3)
$$
where $f'$ is in the ideal of relations from Proposition \ref{step 5}. Applying $p^*$ gives
$$
0=p^*(f)=f'(\lambda_1,x,x,x)+(-\lambda_1-x)g(\lambda_1,x,x,x,-\lambda_1-x).
$$
The first relation from Proposition \ref{step 5} is the most complicated, so we demonstrate how it works in this
case. Plugging in we have
$$
0=6\lambda_1(\lambda_1+x)-(\lambda_1+x)g(\lambda_1,x,x,x,-\lambda_1-x).
$$
Since $\lambda_1+x$ isn't a zero-divisor we may cancel and obtain
$$
g(\lambda_1,x,x,x,-\lambda_1-x)=6\lambda_1
$$
and so, up to an element of the kernel of $p^*$, $g=6\lambda_1$.
This then gives the relation
$$
f=f'+\delta_3g=6\lambda_1(\lambda_1+\delta_{12}+\delta_{13}-\delta_{23}+\delta_3)=0.
$$
\end{proof}

So far the stack we have been building contains cuspidal curves, which aren't of interest to us here. So now
we must excise the cuspidal locus.

\begin{lemma}\label{cusp Chow group}
Let $\mathcal X^{\text{cusp}}$ denote the locus of cuspidal curves inside
$\mathcal X$. Then there is a smooth stack $\mathcal Y$ with a morphism
$\eta:\mathcal Y\rightarrow\mathcal X^{\text{cusp}}$ such that:
\begin{itemize}
\item the pushforward $\eta_*$ induces an isomorphism on Chow groups and higher Chow groups,
\item $\CH(\mathcal Y,1;\mathbb Z_{\ell})=0$, and
\item
the Chow ring of $\mathcal Y$ is generated by the pullbacks of
$\lambda_1,\delta_{12},\delta_{13},\delta_{23},\delta_3$ from $\mathcal X$, i.e. the pullback is surjective.
\end{itemize}
\end{lemma}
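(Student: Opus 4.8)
The plan is to describe $\mathcal X^{\mathrm{cusp}}$ explicitly from the Weierstrass presentations of the strata, build a smooth model $\mathcal Y$ of it, and verify the three assertions by a stratification/patching argument parallel to Propositions~\ref{step 3}--\ref{step 6}.

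First I would compute $\mathcal X^{\mathrm{cusp}}$ stratum by stratum. By Theorem~\ref{Weierstrass} a genus-one curve is cuspidal exactly when its Weierstrass parameters $(a,b)$ vanish, so on each of the quotient presentations appearing in the proof of Proposition~\ref{strata Chow rings} the cuspidal sublocus is cut out by $a=b=0$, with $b$ rewritten in the marked-point coordinates as there. This realizes $\mathcal U^{\mathrm{cusp}}$, $\mathcal X_\iota^{\mathrm{cusp}}$, $\mathcal X_{23}^{\mathrm{cusp}}$, $\mathcal X_{12}^{\mathrm{cusp}}$, $\mathcal X_{13}^{\mathrm{cusp}}$, $\mathcal X_3^{\mathrm{cusp}}$ as quotients of locally closed subschemes of affine space by $\mathbb G_m$ or $\mu_3$; since the cuspidal cubic is rigid with automorphism group $\mathbb G_m$, each is either an honest scheme (an open subset of $\mathbb A^1$, the residual $\mathbb G_m$ acting freely) or such a scheme times $B\mathbb G_m$ or $B\mu_3$, the remaining parameters recording the positions of $p_1,p_2,p_3$ on the cuspidal cubic and on bubbled-off rational tails. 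In particular $\mathcal X_3^{\mathrm{cusp}}$ is the cuspidal locus of $\mgrtilde{1,1}{2}\times\mgbar{0,4}$, namely $B\mathbb G_m\times\mgbar{0,4}$.

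The reason a separate stack $\mathcal Y$ is needed is that, assembled inside $\mathcal X$, these pieces do not form a smooth substack: $\mathcal X^{\mathrm{cusp}}$ is reducible (the closure of the irreducible-cuspidal locus, together with $\mathcal X_3^{\mathrm{cusp}}$) and is singular and/or non-reduced at the loci where two of $p_1,p_2,p_3$ collide, where a marked point runs into the cusp, or where the components meet. I would take $\mathcal Y$ to be the smooth stack assembled directly from the stratum-by-stratum description — in the simplest scenario just $(\mathcal X^{\mathrm{cusp}})_{\mathrm{red}}$, if that is smooth, and otherwise a small modification of it — together with the tautological morphism $\eta\colon\mathcal Y\to\mathcal X^{\mathrm{cusp}}$. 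The substantive point, and the main obstacle, is to pin down the local structure of $\mathcal X^{\mathrm{cusp}}$ near these bad loci and to check that $\eta$ can be chosen so that $\eta_*$ is an isomorphism on Chow groups and on higher Chow groups with $\mathbb Z_\ell$ coefficients; this is automatic if $\mathcal Y=(\mathcal X^{\mathrm{cusp}})_{\mathrm{red}}$, since these invariants ignore nilpotents, and otherwise forces $\eta$ to be a universal homeomorphism.

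Granting $\mathcal Y$ and $\eta$, the remaining two assertions follow by patching. The stratification of $\mathcal X$ restricts to one of $\mathcal Y$ whose strata are vector bundles over $B\mathbb G_m$ and $B\mu_3$ and products of these with $\mathbb A^n$ and $\mathbb P^n$; each has finitely generated Chow ring with injective base change to $\bar{\mathbbm k}$ and, by the Proposition listing $\CH(-,1;\mathbb Z_\ell)=0$ for $\mathbb A^n,\mathbb P^n,B\mathbb G_m,B\mu_n$ together with homotopy invariance, vanishing first higher Chow group with $\mathbb Z_\ell$ coefficients. Applying Lemma~\ref{higher Chow patching}(1) inductively up the stratification yields $\CH(\mathcal Y,1;\mathbb Z_\ell)=0$. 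For the surjectivity of the pullback $\CH(\mathcal X)\to\CH(\mathcal Y)$, I would compute $\CH(\mathcal Y)$ from the same stratification and match generators: $\lambda_1$ restricts to the generator of each $B\mathbb G_m$-base by Note~\ref{hodge bundle pullback}, and $\delta_{12},\delta_{13},\delta_{23},\delta_3$ restrict to the classes of the boundary divisors of $\mathcal Y$ cut out by the corresponding strata of $\mathcal X$, so that $\lambda_1$ and the $\delta$'s generate $\CH(\mathcal Y)$. This last step mirrors the bookkeeping in Propositions~\ref{step 3}--\ref{step 6} and is routine once the geometry of $\mathcal Y$ is fixed.
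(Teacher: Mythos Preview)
Your outline for the second and third bullets (vanishing of $\CH(\mathcal Y,1;\mathbb Z_\ell)$ and surjectivity of pullback) via stratification and Lemma~\ref{higher Chow patching} is sound once $\mathcal Y$ exists. The gap is precisely the step you flag as ``the main obstacle'': you do not actually construct $\mathcal Y$.

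Your hope that $(\mathcal X^{\mathrm{cusp}})_{\mathrm{red}}$ is already smooth fails. The singularities of $\mathcal X^{\mathrm{cusp}}$ at the loci where $p_2$ or $p_3$ meets the cusp are genuine geometric singularities, not nilpotent thickenings, so passing to the reduction does not remove them. Your fallback of ``a small modification'' which is a universal homeomorphism is not a construction, and it is not clear one can produce a smooth $\mathcal Y$ this way. (Also, a minor point: no $B\mu_3$ appears in the cuspidal locus---for instance $\mathcal X_\iota^{\mathrm{cusp}}$ is a single point, since on the cuspidal cubic $y^2=x^3$ the condition $y_2^2=x_2^3$ with $y_2\neq 0$ is a free $\mathbb G_m$-orbit.)

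The paper's missing idea is to \emph{normalize the cusp}. Replace each cuspidal rational component by its normalization $\mathbb P^1$ and mark the preimage of the cusp as a new fourth point $p_4$. This gives an explicit smooth moduli stack $\mathcal Y$ of four-pointed genus-zero curves with $p_1=\infty$, $p_4=0$, and the condition that $p_2,p_3$ are not both at $0$; the map $\eta$ is ``pinch $p_4$ back to a cusp''. The paper shows $\eta$ is a Chow envelope and then checks, via excision along the induced stratification, that $\eta_*$ is an isomorphism on higher Chow as well. With this concrete $\mathcal Y$ in hand, the stratification collapses to just four pieces---$\mathbb P^1$, two copies of $[\mathbb A^1/\mathbb G_m]$, and $B\mathbb G_m\times\mathbb P^1$---and the remaining claims are immediate. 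The normalization construction is the content of the lemma; without it the argument does not go through.
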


\begin{proof}
The stack $\mathcal X^{\text{cusp}}$ parametrizes cuspidal curves with
three marked points where the first marked point is the point at infinity, and so it inherits its singularities from
curves where the markings lie on the cusp. If we smooth these cuspidal curves into $\mathbb P^1$'s and
mark the preimage of the cusp as a fourth marked point, the resulting
stack, which we call $\mathcal Y$, parametrizes four points on $\mathbb P^1$ subject to the following conditions:
\begin{itemize}
\item the first and fourth marked point are $\infty$ and 0, respectively
\item the second and third marked points may not both overlap with 0.
\end{itemize}
Note that this is \textit{not} directly related to $\mgbar{0,4}$: for instance, we allow the following curves

\begin{figure}[H]
\begin{center}
\includegraphics[scale=.15]{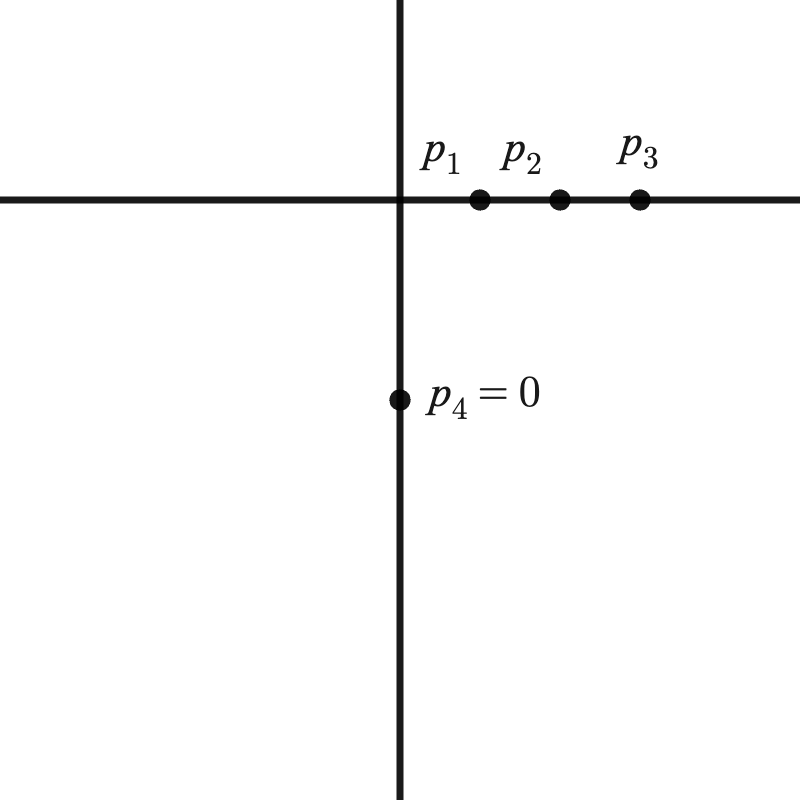}
\end{center}
\caption*{A curve in $\mathcal Y$ which is not in $\mgbar{0,4}$. These curves
are the isomorphic image of $\mg{0,2}\times\mgbar{0,4}$.}
\end{figure}

The morphism $\eta:\mathcal Y\rightarrow\mathcal X^{\text{cusp}}$
given by pinching the fourth marked point is readily
seen to be a Chow envelope and hence induces a surjection on Chow groups. While Chow envelopes
do not in general induce surjections on higher Chow groups, an application of the excision sequence
(using the below stratification) shows
that in this case $\eta_*$ is an isomorphism on higher Chow.
Therefore it is enough to compute using $\mathcal Y$ instead by considering the composition
$$
\begin{tikzcd}
\CH(\mathcal Y)\arrow[rr, bend left]\arrow[r] & \CH(\mathcal X^{\text{cusp}})\arrow[r] & \CH(\mathcal X).
\end{tikzcd}
$$

There is an analogous stratification of $\mathcal Y$ to the stratification of $\mathcal X$,
as follows.
\begin{enumerate}
\item Curves where $p_2,p_3\neq\infty$ and we do not have $p_2=p_3=0$. This locus is
isomorphic to $\mathbb P^1$. The generator of $\CH(\mathbb P^1)$ corresponds to the pullback
of both $\lambda_1$ and $\delta_{23}$.
\item Curves where $p_2$ overlaps with $\infty$ but $p_3$ doesn't. This locus is isomorphic
to $[\mathbb A^1/\mathbb G_m]$ and geometrically corresponds to the pullback of $\delta_{12}$.
\item Curves where $p_3$ overlaps with $\infty$ but $p_2$ doesn't. This locus is isomorphic
to $[\mathbb A^1/\mathbb G_m]$ and geometrically corresponds to the pullback of $\delta_{13}$.
\item Curves shown in the figure above. This locus is isomorphic to $\mg{0,2}\times\mgbar{0,4}\cong
B\mathbb G_m\times\mathbb P^1$ and geometrically corresponds to the pullback of $\delta_3$.
\end{enumerate}
The first higher Chow group with $\ell$-adic coefficients of each of these loci is trivial, and so
we conclude that $\CH(\mathcal Y,1;\mathbb Z_{\ell})=0$.

We also stratify $\mathcal Y$ as follows:
$$
\mathcal Y=\left[\frac{(\mathbb P^1\times\mathbb P^1)\setminus\{(\infty,\infty), (0,0)\}}{\mathbb G_m}\right]
\sqcup(B\mathbb G_m\times\mathbb P^1).
$$
With the geometric interpretation of loci (1)-(4) above, we see, by the projective bundle formula,
excision, and patching, that the Chow ring of $\mathcal Y$ is generated by
the pullback of $\lambda_1,\delta_{12},\delta_{13},\delta_{23},\delta_3$ from
$\mathcal X$. We omit the actual computation of $\CH(\mathcal Y)$ since this is all we needed.
\end{proof}

\begin{note}
The Chow ring of $\mathcal Y$ can be directly computed, but all that matters here is its generators.
The Chow group and first higher Chow group of $\mathcal X^{\text{cusp}}$
can also be computed via patching, with no need for Chow envelopes -- however,
one then runs into issues with the lack of a ring structure on $\CH(\mathcal X^{\text{cusp}})$.
\end{note}

\begin{lemma}\label{cusp fundamental class}
The fundamental class of the cuspidal locus is $24\lambda_1^2$.
\end{lemma}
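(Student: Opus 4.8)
The plan is to exhibit $\mathcal X^{\text{cusp}}$ as an honest complete intersection by pulling back the cuspidal point along the forgetful map to $\mgrtilde{1,1}{2}$. First I would construct the morphism $\pi\colon\mathcal X\to\mgrtilde{1,1}{2}\cong[\mathbb A^2_{-4,-6}/\mathbb G_m]$ obtained by forgetting $p_2,p_3$ and contracting the rational tails that become unstable (a tail meeting the rest of the curve in a single node and carrying only $p_1$, or only nodes, is unstable); this is defined on all of $\mathcal X$, and by the argument of Note \ref{hodge bundle pullback} it pulls the Hodge line bundle on $\mgrtilde{1,1}{2}$ back to the Hodge bundle of $\mathcal X$, so $\pi^*\lambda_1=\lambda_1$. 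Writing $a,b$ for the coordinates on $\mathbb A^2_{-4,-6}$ — the Weierstrass coefficients of the elliptic part of the curve — they are sections of line bundles $L_{-4},L_{-6}$ with $c_1(L_{-4})=-4\lambda_1$ and $c_1(L_{-6})=-6\lambda_1$ by Theorem \ref{Weierstrass}.

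Next I would identify $\pi^{-1}(B\mathbb G_m)$, where $B\mathbb G_m=V(a,b)\hookrightarrow\mgrtilde{1,1}{2}$ is the (regularly embedded, codimension two) locus of the cuspidal curve. As a set, $\mathcal X^{\text{cusp}}=\pi^{-1}(B\mathbb G_m)$: a cusp already accounts for the full arithmetic genus, so the remaining components form a rational tree, and since bananas are excluded from $\mathcal X$ the cusp lies on the component that survives stabilization, i.e.\ on $\pi(\mathcal C)$. Scheme-theoretically $\pi^{-1}(B\mathbb G_m)=V(\pi^*a,\pi^*b)$. Since $\mathcal X$ is smooth, hence Cohen--Macaulay, and this subscheme is cut out by two equations while being supported on the pure codimension two substack $\mathcal X^{\text{cusp}}$, the pair $(\pi^*a,\pi^*b)$ is a regular sequence; thus $V(\pi^*a,\pi^*b)$ is a codimension two local complete intersection and its cycle class is the product of the two divisor classes,
$$
[V(\pi^*a,\pi^*b)]=c_1(\pi^*L_{-4})\cdot c_1(\pi^*L_{-6})=(-4\lambda_1)(-6\lambda_1)=24\lambda_1^2.
$$

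It then remains to see that this cycle is $\mathcal X^{\text{cusp}}$ with multiplicity one. By the description in Lemma \ref{cusp Chow group} the stack $\mathcal X^{\text{cusp}}$ is irreducible with generic point lying in the chart $\mathcal U$, so it suffices to check that $V(\pi^*a,\pi^*b)$ is reduced over $\mathcal U$; there, by the proof of Proposition \ref{strata Chow rings}(1), $\pi^*a=(y_2^2-x_2^3-y_3^2+x_3^3)/(x_2-x_3)$ and $\pi^*b=y_2^2-x_2^3-(\pi^*a)x_2$, and one checks directly that $(\pi^*a,\pi^*b)=(y_2^2-x_2^3,\,y_3^2-x_3^3)$, the radical ideal of a product of two cuspidal cubics. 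Hence the multiplicity is one and $[\mathcal X^{\text{cusp}}]=24\lambda_1^2$.

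The step requiring the most care is the identification $\mathcal X^{\text{cusp}}=V(\pi^*a,\pi^*b)$ with the correct (reduced, pure codimension two) structure: checking that $\pi$ is everywhere defined, that no cuspidal curve in $\mathcal X$ can hide its cusp on a contracted rational tail (so that $V(\pi^*a,\pi^*b)$ has no stray components), and the generic reducedness computation in the chart $\mathcal U$. Once these are in place, everything else is formal.
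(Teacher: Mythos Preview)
Your proposal is correct and follows exactly the paper's approach: pull back the class of the origin in $\mgrtilde{1,1}{2}\cong[\mathbb A^2_{-4,-6}/\mathbb G_m]$ along the forgetful morphism $\pi\colon\mathcal X\to\mgrtilde{1,1}{2}$. The paper compresses this into a single sentence, whereas you have spelled out the details it leaves implicit---the existence of $\pi$ on the reducible strata, the set-theoretic identification $\mathcal X^{\text{cusp}}=\pi^{-1}(0)$ (no cusp hides on a contracted tail, using the absence of bananas), the regular-sequence argument, and the generic reducedness check in the chart $\mathcal U$ via the explicit Weierstrass formulas for $a,b$.
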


\begin{proof}
This follows by pulling back along the morphism
$\mathcal X\rightarrow\mgrtilde{1,1}{2}$ which forgets $p_2$ and $p_3$,
since in
$$
\mgrtilde{1,1}{2}\cong\left[\frac{\mathbb A_{-4,-6}}{\mathbb G_m}\right]
$$
the fundamental class of the cuspidal locus is $[(0,0)]=24\lambda_1^2$.
\end{proof}

\begin{proposition}
The Chow ring of $\mgbar{1,3}\setminus\mathcal X_{\alpha.1}$ is
generated by $\lambda_1,\delta_{12},\delta_{13},\delta_{23},\delta_3$,
and the ideal of relations is generated by the following elements:
$$
\begin{aligned}
&24\lambda_1^2,\\
&\delta_{12}(\delta_{12}+\delta_3+\lambda_1),\quad
\delta_{13}(\delta_{13}+\delta_3+\lambda_1),\quad
\delta_{23}(\delta_{23}+\delta_3+\lambda_1),\\
&\delta_3(\delta_3+\delta_{12}+\lambda_1),\\
&\delta_3(\delta_{12}-\delta_{13}),\quad
\delta_3(\delta_{12}-\delta_{23}),\\
&\delta_{12}\delta_{13},\quad
\delta_{12}\delta_{23},\quad
\delta_{13}\delta_{23},\\
&6\lambda_1(\lambda_1+\delta_{12}+\delta_{13}-\delta_{23}+\delta_3),
\end{aligned}
$$
and
$$
\CH(\mgbar{1,3}\setminus\mathcal X_{\alpha.1},1;\mathbb Z_{\ell})=0.
$$
\end{proposition}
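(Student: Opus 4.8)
The plan is to obtain $\mgbar{1,3}\setminus\mathcal X_{\alpha.1}$ by excising the cuspidal locus from $\mathcal X$. The identity $\mgbar{1,3}=(\mathcal X\setminus\mathcal X^{\text{cusp}})\cup\mathcal X_{\alpha.1}$ recorded above, together with the fact that $\mathcal X$ contains no banana curves and hence is disjoint from $\mathcal X_{\alpha.1}$, gives $\mgbar{1,3}\setminus\mathcal X_{\alpha.1}=\mathcal X\setminus\mathcal X^{\text{cusp}}$. So the whole statement reduces to excising $\iota\colon\mathcal X^{\text{cusp}}\hookrightarrow\mathcal X$ (a regular embedding of codimension $2$) from the stack whose Chow ring and first higher Chow group were determined in Proposition \ref{step 6}.

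For the ring structure, I would first show that $\iota^*\colon\CH(\mathcal X)\to\CH(\mathcal X^{\text{cusp}})$ is surjective. By Lemma \ref{cusp Chow group} there is $\eta\colon\mathcal Y\to\mathcal X^{\text{cusp}}$ with $\eta_*$ an isomorphism; since $\eta$ is an isomorphism over a dense open one has $\eta_*\eta^*=\id$, and because $\CH(\mathcal Y)$ is generated by $\eta^*\iota^*\lambda_1,\dots,\eta^*\iota^*\delta_3$ — all of which lie in the subring $\im\eta^*$ — the ring homomorphism $\eta^*$ is in fact bijective with inverse $\eta_*$. Hence any $\beta\in\CH(\mathcal X^{\text{cusp}})$ has the form $\eta_*\bigl(P(\eta^*\iota^*\lambda_1,\dots)\bigr)=P(\iota^*\lambda_1,\dots)=\iota^*\bigl(P(\lambda_1,\dots)\bigr)$ for some polynomial $P$, so $\iota^*$ is surjective. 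Lemma \ref{ideal pushforward} then identifies $\im\iota_*$ with the ideal $([\mathcal X^{\text{cusp}}])$, which is $(24\lambda_1^2)$ by Lemma \ref{cusp fundamental class}, and the right-exact excision sequence
$$
\CH(\mathcal X^{\text{cusp}})\xrightarrow{\ \iota_*\ }\CH(\mathcal X)\longrightarrow\CH(\mathcal X\setminus\mathcal X^{\text{cusp}})\longrightarrow 0
$$
gives $\CH(\mgbar{1,3}\setminus\mathcal X_{\alpha.1})\cong\CH(\mathcal X)/(24\lambda_1^2)$ — that is, the presentation of Proposition \ref{step 6} with the single extra relation $24\lambda_1^2$ adjoined, which is exactly the list in the statement.

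For the vanishing $\CH(\mgbar{1,3}\setminus\mathcal X_{\alpha.1},1;\mathbb Z_{\ell})=0$, the plan is to apply part (2) of Lemma \ref{higher Chow patching} with $Z=\mathcal X^{\text{cusp}}$ and $X=\mathcal X$. We already have $\CH(\mathcal X,1;\mathbb Z_{\ell})=0$ from Proposition \ref{step 6}, and $\CH(\mathcal X^{\text{cusp}})\cong\CH(\mathcal Y)$ is assembled from projective bundles over classifying stacks and so injects into its base change to $\bar{\mathbbm k}$. The missing hypothesis — injectivity of $\iota_*\colon\CH(\mathcal X^{\text{cusp}})\to\CH(\mathcal X)$ — is the main obstacle. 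By the self-intersection formula $\iota^*\iota_*=c_2(\mathcal N)\cdot(-)$ and the identity $c_2(\mathcal N)=\iota^*[\mathcal X^{\text{cusp}}]=24\lambda_1^2$, every class in $\ker\iota_*$ is annihilated by $24\lambda_1^2$, so it suffices to show that $24\lambda_1^2$ — equivalently $\lambda_1$, granting the absence of $2$- and $3$-torsion — is a non-zero-divisor in $\CH(\mathcal X^{\text{cusp}})\cong\CH(\mathcal Y)$. This is where I expect the real work to lie: $\lambda_1$ already fails to be a non-zero-divisor on the open stratum $[(\mathbb P^1\times\mathbb P^1\setminus\{(\infty,\infty),(0,0)\})/\mathbb G_m]$ of $\mathcal Y$, so one genuinely has to compute the full ring $\CH(\mathcal Y)$ (via the stratification set up in the proof of Lemma \ref{cusp Chow group}) and check that the closed stratum $B\mathbb G_m\times\mathbb P^1$ repairs this. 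Granting it, Lemma \ref{higher Chow patching}(2) delivers the vanishing and completes the proof.
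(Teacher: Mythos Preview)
Your approach is the paper's: excise $\mathcal X^{\text{cusp}}$ from $\mathcal X$, use surjectivity of pullback together with Lemma \ref{ideal pushforward} to identify $\im\iota_*$ with $(24\lambda_1^2)$, and invoke Lemma \ref{higher Chow patching}(2) for the higher Chow vanishing. Two remarks.

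First, your detour through $\eta^*$ as a ring isomorphism is unnecessary and slightly off: $\mathcal X^{\text{cusp}}$ is singular, so $\CH(\mathcal X^{\text{cusp}})$ carries no intersection product and calling $\eta^*$ a ring map does not quite make sense. The paper sidesteps this by applying the projection-formula argument of Lemma \ref{ideal pushforward} directly to the composite $\mathcal Y\to\mathcal X$ (whose pullback is surjective by Lemma \ref{cusp Chow group} and which sends $1$ to $24\lambda_1^2$), then uses that $\eta_*$ is surjective (Chow envelope) to conclude $\im\iota_*=\im(\iota\eta)_*=(24\lambda_1^2)$.

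Second, for the injectivity of $\iota_*$ the paper does not do more than you do: it simply asserts in one sentence that ``the pushforward map is multiplication by $24\lambda_1^2$ and hence is injective.'' Unpacked, this is the statement that the map $\CH(\mathcal X)/\ker p^*\to\CH(\mathcal X)$, $[\beta]\mapsto 24\lambda_1^2\beta$, is injective, which is exactly the non-zero-divisor condition on $\CH(\mathcal Y)$ you isolate via the self-intersection formula. So the gap you flag is precisely the step the paper takes for granted; filling it does require the direct computation of $\CH(\mathcal Y)$ alluded to in the Note following Lemma \ref{cusp Chow group}.
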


\begin{proof}
By Lemma \ref{cusp Chow group} the pullback
$$
\CH(\mathcal X)\rightarrow\CH(\mathcal Y)
$$
is surjective, and by Lemma \ref{cusp fundamental class} we have $[\mathcal Y]=24\lambda_1^2$. Therefore
by Lemma \ref{ideal pushforward},
we get that the image of $p_*$ is the ideal generated by $24\lambda_1^2$,
and so the Chow ring of
$\mgbar{1,3}\setminus\mathcal X_{\alpha.1}=\mathcal X\setminus\mathcal X^{\text{cusp}}$
has the same generators as in the previous step, but with the
added relation $24\lambda_1^2=0$. In fact, we see that the pushforward map is multiplication by
$24\lambda_1^2$ and hence is injective. Therefore by Lemma \ref{higher Chow patching}
we conclude $\CH(\mgbar{1,3}\setminus\mathcal X_{\alpha.1},1;\mathbb Z_{\ell})=0$.
\end{proof}

Before we can finish the computation, we need a quick analysis of the fundamental class, $\delta_{\alpha.1}$,
of the next locus, $\mathcal X_{\alpha.1}$, following \cite[Lemma 3.2.1]{Bel98}.

\begin{lemma}\label{WDVV}
The following identity holds in $\CH(\mgbar{1,3})$:
$$
4\delta_{\alpha.1}=24\lambda_1(\delta_{12}+\delta_{13}-\delta_{23}+\delta_3).
$$
\end{lemma}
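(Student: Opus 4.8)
The plan is to use the standard WDVV (associativity) relation on the moduli of genus-one curves, following Belorousski--Pandharipande as cited. I would work on $\mgbar{1,4}$ (or on $\mgbar{1,3}$ after attaching one more point via the universal curve), where the WDVV equation gives a linear relation among boundary divisor classes. Concretely, for four points $\{1,2,3,4\}$ one writes down the two pullbacks of the boundary point class on $\mgbar{0,4}$ along the forgetful/stabilization map, obtaining an equality of the form $\sum_{S} \delta_{0,S} = \sum_{S'} \delta_{0,S'}$ where the two sides correspond to the two ways of partitioning $\{1,2,3,4\}$ into pairs. Since we are in genus one, one of the ``components'' in each boundary stratum can itself be a genus-one curve, so each $\delta_{0,S}$ unwinds into a sum of genus-one boundary divisors plus the purely-rational-tails strata. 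After pushing forward along the map $\mgbar{1,4}\to\mgbar{1,3}$ that forgets the fourth point, the genus-one irreducible boundary contributes a $\lambda_1$ (this is the classical fact that $\pi_*(\delta_{\mathrm{irr}}^2)$ or the self-intersection of the section produces $-\psi = -\lambda_1$ up to the relevant normalization), and the banana stratum $\mathcal X_{\alpha.1}$ appears with multiplicity $4$ from the four choices of which marked point sits alone.

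The key steps, in order, would be: (1) set up the WDVV relation on $\mgbar{1,4}$, being careful to list every boundary divisor (both the divisors where the genus drops to zero on one side and the divisors with a genus-one bubble) that appears in each of the two pair-partition expansions; (2) identify which of these divisors push forward to zero, which push forward to boundary divisors of $\mgbar{1,3}$ (the $\delta_{ij}$ and $\delta_3$), and which push forward to contributions involving $\lambda_1$; (3) carry out the pushforward along $\pi:\mgbar{1,4}\to\mgbar{1,3}$, using the projection formula together with the comparison of $\psi$-classes and $\lambda_1$ under $\pi$, and the fact that a banana curve with the forgotten point on the genus-one-looking side collapses to a banana curve in $\mgbar{1,3}$ exactly $4$ ways; (4) collect terms to arrive at $4\delta_{\alpha.1} = 24\lambda_1(\delta_{12}+\delta_{13}-\delta_{23}+\delta_3)$, matching the asymmetry in the signs (the $-\delta_{23}$ reflecting that the pair $\{2,3\}$ plays a distinguished role because $p_1$ is the ``Weierstrass'' point at infinity while $p_2,p_3$ are the ones allowed to collide with singularities).

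The main obstacle I expect is bookkeeping: correctly enumerating the boundary divisors on $\mgbar{1,4}$ in the two sides of WDVV and tracking their multiplicities and signs after pushforward. In particular, getting the coefficient $24$ right requires knowing the precise relation between $\lambda_1$ and the relevant $\psi$ or node classes on $\mgbar{1,1}$ (where $\CH(\mgbar{1,1}) = \mathbb Z[\lambda_1]/(24\lambda_1^2)$ and the cuspidal point is $24\lambda_1^2$), and the factor of $4$ on the left requires being careful about the $\mathbb Z/2$ automorphism of the banana curve (which exchanges the two nodes) so that one does not over- or under-count. A secondary subtlety is that we are working inside $\mgbar{1,3}$ proper, not inside the enlarged stack $\mathcal X$, so one should verify that the WDVV relation, which is a relation among honest boundary classes on $\mgbar{1,4}\to\mgbar{1,3}$, can be stated directly in $\CH(\mgbar{1,3})$ without reference to $A_2$-stable curves — this is automatic since everything here is a Deligne--Mumford boundary computation, but it is worth a sentence. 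Once the enumeration is pinned down, the rest is the projection formula and a routine collection of terms.
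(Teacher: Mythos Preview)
Your proposal has a structural gap: there is no forgetful or stabilization map from $\mgbar{1,4}$ to $\mgbar{0,4}$, since forgetting marked points does not change the genus. Consequently one cannot pull back the boundary-point equivalence on $\mgbar{0,4}$ to obtain a relation $\sum_S \delta_{0,S} = \sum_{S'} \delta_{0,S'}$ on $\mgbar{1,4}$; in fact the boundary classes $\delta_{0,S}$ are linearly independent in $\Pic(\mgbar{1,4})\otimes\mathbb Q$, the only divisorial relation being $12\lambda_1=\delta_{\mathrm{irr}}$. Even setting this aside, the codimension count is wrong: the forgetful map $\pi:\mgbar{1,4}\to\mgbar{1,3}$ has relative dimension one, so $\pi_*$ would send a divisorial relation to a relation in $\CH^0$, not in $\CH^2$ where $\delta_{\alpha.1}$ lives.

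The paper instead uses the \emph{gluing} map $f:\mgbar{0,5}\to\mgbar{1,3}$ that identifies the fourth and fifth marked points. On $\mgbar{0,5}$ there are genuine WDVV relations among the Keel divisors $D_{ij}$, pulled back from $\mgbar{0,4}$ along honest forgetful maps in genus zero. Since $\dim\mgbar{0,5}=2$ and the image of $f$ is the codimension-one locus $\delta_{\mathrm{irr}}\subset\mgbar{1,3}$, pushing a codimension-one relation forward along $f$ lands in $\CH^2(\mgbar{1,3})$ as required. Three such relations, each acquiring a factor of $2$ because $f$ is degree two onto its image, produce a linear system in the classes $12\lambda_1\delta_{ij}$, $12\lambda_1\delta_3$, and $\delta_{\alpha.i}$; solving for $\delta_{\alpha.1}$ yields the stated identity. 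Your instincts about the origin of the coefficient $24$ and the $\mu_2$-automorphism factor are correct, but they enter through this gluing-from-genus-zero construction rather than through a forgetful map from $\mgbar{1,4}$.
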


\begin{proof}
There is a map $f:\mgbar{0,5}\rightarrow
\mgbar{1,3}$ given by identifying the fourth and fifth marked points. Using the notation of
\cite{Kee92}, we let $D_{ij}$ denote the divisor on $\mgbar{0,5}$ where the $i^{\text{th}}$ and $j^{\text{th}}$
marked points coincide. Then the WDVV relations give
$$
\begin{aligned}
D_{12}+D_{45}&=D_{14}+D_{25}\\
D_{13}+D_{45}&=D_{14}+D_{35}\\
D_{23}+D_{45}&=D_{24}+D_{35}.
\end{aligned}
$$ Pushing these
relations forward to $\mgbar{1,3}$ gives
$$
\begin{aligned}
2(12\lambda_1\delta_{12}+12\lambda_1\delta_3)&=
2(\delta_{\alpha.1}+\delta_{\alpha.2})\\
2(12\lambda_1\delta_{13}+12\lambda_1\delta_3)&=
2(\delta_{\alpha.1}+\delta_{\alpha.3})\\
2(12\lambda_1\delta_{23}+12\lambda_1\delta_3)&=
2(\delta_{\alpha.2}+\delta_{\alpha.3})
\end{aligned}
$$ where $\delta_{\alpha.i}$ is the locus of banana curves where the
$i^{\text{th}}$ point is on its own rational component, and where the extra factor of two comes the fact that
on each divisor the map $f$ is degree two. Solving this system of equations for $\delta_{\alpha.1}$
gives the desired formula.
\end{proof}

\begin{theorem}\label{m13bar Chow ring}
The Chow ring of $\mgbar{1,3}$ is
generated by $\lambda_1,\delta_{12},\delta_{13},\delta_{23},\delta_3$,
and the ideal of relations is generated by the following elements:
$$
\begin{aligned}
&24\lambda_1^2,\\
&\delta_{12}(\delta_{12}+\delta_3+\lambda_1),\quad
\delta_{13}(\delta_{13}+\delta_3+\lambda_1),\quad
\delta_{23}(\delta_{23}+\delta_3+\lambda_1),\\
&\delta_3(\delta_3+\delta_{12}+\lambda_1),\\
&\delta_3(\delta_{12}-\delta_{13}),\quad
\delta_3(\delta_{12}-\delta_{23}),\\
&\delta_{12}\delta_{13},\quad
\delta_{12}\delta_{23},\quad
\delta_{13}\delta_{23},\\
&12\lambda_1^2(\lambda_1+\delta_{12}+\delta_{13}-\delta_{23}+\delta_3),
\end{aligned}
$$
and
$$
\CH(\mgbar{1,3},1;\mathbb Z_{\ell})=0.
$$
\end{theorem}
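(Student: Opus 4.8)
The plan is to perform the final patching step, adding the closed stratum $\mathcal X_{\alpha.1}$ to $\mgbar{1,3}\setminus\mathcal X_{\alpha.1}$ to obtain all of $\mgbar{1,3}$, using the porism of the patching lemma (Lemma~\ref{patching lemma}). From Proposition~\ref{strata Chow rings}(7) we know $\mathcal X_{\alpha.1}\cong[\mathbb P^1/\mu_2]$, that $\CH(\mathcal X_{\alpha.1})=\mathbb Z[\lambda_1,x]/(2\lambda_1, x(x+\lambda_1))$, and that $c_2(\mathcal N)=0$. Since the top Chern class of the normal bundle vanishes, it is certainly a zero-divisor, so we cannot use the original patching lemma; instead Lemma~\ref{patching lemma} computes $\CH(\mgbar{1,3})$ modulo $i_*(\Ann(c_2(\mathcal N))) = i_*(\Ann(0)) = i_*(\CH(\mathcal X_{\alpha.1}))$, i.e. modulo the entire image of the pushforward from the $\alpha.1$ locus. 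By Lemma~\ref{ideal pushforward}, once we check that $i^*$ is surjective, this image is the ideal generated by $\delta_{\alpha.1}$. So the first task is to identify the pullback homomorphism $i^*:\CH(\mgbar{1,3}\setminus\mathcal X_{\alpha.1})\to\CH(\mathcal X_{\alpha.1})$ explicitly on the generators $\lambda_1,\delta_{12},\delta_{13},\delta_{23},\delta_3$, verify it is surjective, and then assemble the fiber product.

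The key geometric input is the identification of $i^*$ on each divisor class, which should come from the description of $\mathcal X_{\alpha.1}=[\mathbb P^1/\mu_2]$ as the image of the gluing map from $\mgbar{0,3}\times\mgbar{0,4}$ and from intersecting the various boundary strata with a banana curve. I expect $i^*\lambda_1 = \lambda_1$ (the Hodge bundle restricts appropriately), $i^*\delta_{\alpha.1} = c_2(\mathcal N) = 0$, and the classes $i^*\delta_{12}, i^*\delta_{13}, i^*\delta_{23}, i^*\delta_3$ to be expressible in terms of $\lambda_1$ and $x$; combined with the relation $4\delta_{\alpha.1} = 24\lambda_1(\delta_{12}+\delta_{13}-\delta_{23}+\delta_3)$ from Lemma~\ref{WDVV} this pins down a linear relation among their restrictions. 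Once $i^*$ is surjective, Lemma~\ref{ideal pushforward} gives that $\im(i_*)=(\delta_{\alpha.1})$, so $\CH(\mgbar{1,3})/(\delta_{\alpha.1})$ is the fiber product $R$ of the lemma. To recover $\CH(\mgbar{1,3})$ itself we add back the relations cutting out $\delta_{\alpha.1}$: namely we must reintroduce $i_*$ of relations in $\CH(\mathcal X_{\alpha.1})$, which by the projection formula are $\delta_{\alpha.1}$ times classes pulled back from $\mgbar{1,3}$, together with $i_*(2\lambda_1)$ and $i_*$ of the other relation. Here Lemma~\ref{WDVV} does the heavy lifting: $4\delta_{\alpha.1}=24\lambda_1(\lambda_1+\delta_{12}+\delta_{13}-\delta_{23}+\delta_3) - 24\lambda_1^2$, and since $24\lambda_1^2=0$ already, we get $4\delta_{\alpha.1}=24\lambda_1(\lambda_1+\delta_{12}+\delta_{13}-\delta_{23}+\delta_3)$; meanwhile the relation $6\lambda_1(\lambda_1+\delta_{12}+\delta_{13}-\delta_{23}+\delta_3)$ from the previous proposition no longer holds on $\mgbar{1,3}$ (it was valid only with $\mathcal X_{\alpha.1}$ excised), and must be replaced by its pushforward-corrected version $12\lambda_1^2(\lambda_1+\delta_{12}+\delta_{13}-\delta_{23}+\delta_3)$, which is exactly $2\lambda_1\cdot 6\lambda_1(\cdots)$ reflecting the $2\lambda_1$-torsion structure of $\CH(\mathcal X_{\alpha.1})$. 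One checks all other relations survive unchanged because their $i^*$-images already vanish or because $\delta_{\alpha.1}$ times the relevant class is zero.

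The main obstacle will be the bookkeeping in the last paragraph: correctly determining \emph{which} relations from $\CH(\mgbar{1,3}\setminus\mathcal X_{\alpha.1})$ persist and which get multiplied by a power of $\lambda_1$ (or by $\delta_{\alpha.1}$) when we un-excise. Concretely, the subtlety is that $R = \CH(\mgbar{1,3})/(\delta_{\alpha.1})$ only sees the relation $6\lambda_1(\lambda_1+\delta_{12}+\delta_{13}-\delta_{23}+\delta_3)$, whereas in $\CH(\mgbar{1,3})$ this expression equals $\tfrac{1}{4}\cdot 4\delta_{\alpha.1}$ up to the $24\lambda_1^2 = 0$ relation — but there is no honest division by $4$, so one must argue via the torsion structure that the correct lift of the relation is its multiple by $2\lambda_1$, giving $12\lambda_1^2(\lambda_1+\delta_{12}+\delta_{13}-\delta_{23}+\delta_3)$. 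This requires carefully tracking $i_*(\Ann(2\lambda_1))$ inside $\CH(\mathcal X_{\alpha.1})$ and its image under $i_*$. Finally, the higher Chow group statement $\CH(\mgbar{1,3},1;\mathbb Z_\ell)=0$ follows from Lemma~\ref{higher Chow patching}(1): we have $\CH(\mgbar{1,3}\setminus\mathcal X_{\alpha.1},1;\mathbb Z_\ell)=0$ from the previous proposition, $\CH(\mathcal X_{\alpha.1},1;\mathbb Z_\ell)=0$ from Proposition~\ref{strata Chow rings} (as $[\mathbb P^1/\mu_2]$ is a projective bundle over $B\mu_2$), and $\CH(\mathcal X_{\alpha.1})\to\CH((\mathcal X_{\alpha.1})_{\bar{\mathbbm k}})$ is injective since the Chow ring of $[\mathbb P^1/\mu_2]$ is torsion-free-plus-explicit and survives base change.
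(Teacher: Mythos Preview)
Your overall plan matches the paper's: patch in $\mathcal X_{\alpha.1}$, use Lemma~\ref{WDVV}, and extract the eleventh relation from the $2\lambda_1$-torsion in $\CH(\mathcal X_{\alpha.1})$. But there are two genuine gaps.

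First, invoking the porism of the patching lemma here buys you nothing. Since $c_2(\mathcal N)=0$, the quotient $\CH(\mathcal X_{\alpha.1})/(c_2(\mathcal N))$ is all of $\CH(\mathcal X_{\alpha.1})$, so the fiber product $R$ is canonically just $\CH(\mgbar{1,3}\setminus\mathcal X_{\alpha.1})$ and the lemma's conclusion reduces to the excision sequence. The actual leverage in the paper comes not from the fiber product but from the \emph{injectivity of $p_*$}, which follows from the vanishing of $\CH(\mgbar{1,3}\setminus\mathcal X_{\alpha.1},1;\mathbb Z_\ell)$ established in the previous proposition.

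Second, and more importantly, you never pin down $\delta_{\alpha.1}$ itself. Lemma~\ref{WDVV} only gives $4\delta_{\alpha.1}=4\cdot 6\lambda_1(\lambda_1+\delta_{12}+\delta_{13}-\delta_{23}+\delta_3)$, and your eleventh-relation argument via $p_*(2\lambda_1)=2\lambda_1\cdot\delta_{\alpha.1}$ needs the exact equality $\delta_{\alpha.1}=6\lambda_1(\lambda_1+\delta_{12}+\delta_{13}-\delta_{23}+\delta_3)$. The paper obtains this as follows: since $6\lambda_1(\lambda_1+\delta_{12}+\delta_{13}-\delta_{23}+\delta_3)$ is a relation on the open part, it lies in $\ker j^*=\im p_*$, hence equals $p_*(a)$ for some $a\in\CH^0(\mathcal X_{\alpha.1})=\mathbb Z$; combining $p_*(4a)=4\cdot 6\lambda_1(\cdots)=4\delta_{\alpha.1}=p_*(4)$ with injectivity of $p_*$ forces $a=1$. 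Your ``torsion structure'' remark gestures at this but does not supply it.

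Finally, your plan for the first ten relations (``their $i^*$-images already vanish'') is not how the paper proceeds: knowing a class lies in $\im p_*$ does not make it zero. The paper instead verifies those ten relations directly on $\mgbar{1,3}$ by pulling and pushing along sections such as $\sigma_{13}:\mgbar{1,2}\to\mgbar{1,3}$, bypassing the patching framework entirely for that part.
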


\begin{proof}
All that's left to do is to patch in $\mathcal X_{\alpha.1}$, whose Chow ring
is $\mathbb Z[\lambda_1,x]/(2\lambda_1, \lambda_1(\lambda_1+x))$, by Proposition \ref{strata Chow rings}.
The pullback map is given by
$$
\begin{aligned}
&\lambda_1\mapsto\lambda_1; &\delta_{12}, \delta_{13}, \delta_3\mapsto 0;\ \ 
&\delta_{23}\mapsto x; &\delta_{\alpha.1}\mapsto c_2(\mathcal N),
\end{aligned}
$$
and so the image of the pushforward is the ideal generated by $\delta_{\alpha.1}$, by Lemma
\ref{ideal pushforward}. By Lemma \ref{WDVV} and the injectivity
of the pushforward,
we have
$$
4\delta_{\alpha.1}=24(\delta_{12}+
\delta_{13}-\delta_{23}+\delta_3)=4\cdot 6\lambda_1(\lambda_1+\delta_{12}+\delta_{13}-\delta_{23}+\delta_3)
\neq0.
$$
Hence $6\lambda_1(\lambda_1+\delta_{12}+\delta_{13}-\delta_{23}+\delta_3)$ is in the image of
$p_*$, and so is equal to $p_*(a)$ for some $a$. But then
$$
\begin{aligned}
p_*(4a)&=24\lambda_1(\lambda_1+\delta_{12}+\delta_{13}-\delta_{23}+\delta_3)\\
a\cdot p_*(4)&=a\cdot24\lambda_1(\lambda_1+\delta_{12}+\delta_{13}-\delta_{23}+\delta_3)\\
0&=(a-1)\cdot24\lambda_1(\lambda_1+\delta_{12}+\delta_{13}-\delta_{23}+\delta_3)\\
&=p_*(4(a-1)),
\end{aligned}
$$
and we conclude that $a=1$, again by the injectivity of $p_*$. Therefore the image of $p_*$ is the
ideal generated by $6\lambda_1(\lambda_1+\delta_{12}+\delta_{13}-\delta_{23}+\delta_3)$, and so
there are no new generators added from the previous patching step.
Thus we only need to verify the claimed
relations. The first ten of the relations held on the previous locus and hence will hold on $\mgbar{1,3}$ up to
an element of $(\delta_{\alpha.1})$. We give an example of how to quickly verify them.

Let $\sigma_{13}:\mgbar{1,2}\rightarrow\cgbar{1,2}\cong\mgbar{1,3}$ be the section which sets $p_3=p_1$.
Then we have
$$
\delta_{13}(\delta_{13}+\delta_3+\lambda_1)=\sigma_{13,*}\sigma_{13}^*(\delta_{13}+\delta_3+\lambda_1)=
\sigma_{13,*}(-\lambda_1-\mu_1+\mu_1+\lambda_1)=0,
$$
using the notation of \cite{DLPV21}. The others follow similarly.

For the eleventh and final relation, observe that we must have
$$
0=p_*(0)=p_*(2\lambda_1)=2\lambda_1\cdot 6\lambda_1(\lambda_1+\delta_{12}+\delta_{13}-\delta_{23}+
\delta_3),
$$
which gives the eleventh relation and concludes the proof.
\end{proof}

\begin{note}
Tensoring with $\mathbb Q$, we see that Theorem \ref{m13bar Chow ring} agrees
with Theorem 3.3.2 from \cite{Bel98}.
\end{note}

\begin{note}
One alternative perspective on verifying the relations is this. We \textit{a priori} know the
first ten relations have to hold
(as they come from pushing and pulling along various tautological maps from spaces whose
Chow rings are already known),
and we can view the crux of this proof as obtaining the eleventh relation, which we get from
understanding the correct stack structure on $\mathcal X_{\alpha.1}$ (along with the vanishing
of the first higher Chow groups with $\ell$-adic coefficients).
\end{note}

\newpage
\bibliographystyle{plain}
\bibliography{/Users/martinbishop/documents/research/citations/bibliography.bib}

\begin{thebibliography}{10}

\bibitem{BDL24}
Luca Battistella and Andrea Di~Lorenzo.
\newblock Wall-crossing integral chow rings of $\mgbar{1,n}$.
\newblock 2024.

\bibitem{Bel98}
Pavel Belorousski.
\newblock {\em Chow rings of moduli spaces of pointed elliptic curves}.
\newblock ProQuest LLC, Ann Arbor, MI, 1998.
\newblock Thesis (Ph.D.)--The University of Chicago.

\bibitem{Bis23}
Martin Bishop.
\newblock The integral chow ring of $\mg{1,n}$ for $n=3,\dots,10$.
\newblock {\em To appear in Forum Math. Sigma}, 2024.

\bibitem{Bl86}
Spencer Bloch.
\newblock Algebraic cycles and higher {$K$}-theory.
\newblock {\em Adv. in Math.}, 61(3):267--304, 1986.

\bibitem{DLPV21}
Andrea Di~Lorenzo, Michele Pernice, and Angelo Vistoli.
\newblock Stable cuspidal curves and the integral {C}how ring of $\mgbar{2,1}$.
\newblock {\em To appear in Geom. Topol.}, 2022.

\bibitem{DLV21}
Andrea Di~Lorenzo and Angelo Vistoli.
\newblock Polarized twisted conics and moduli of stable curves of genus two,
  2021.

\bibitem{Kee92}
Sean Keel.
\newblock Intersection theory of moduli space of stable {$n$}-pointed curves of
  genus zero.
\newblock {\em Trans. Amer. Math. Soc.}, 330(2):545--574, 1992.

\bibitem{Lar21}
Eric Larson.
\newblock The integral {C}how ring of {$\overline M_2$}.
\newblock {\em Algebr. Geom.}, 8(3):286--318, 2021.

\bibitem{Per23IV}
Michele Pernice.
\newblock The (almost) integral chow ring of $\mgbar{3}$, 2023.

\bibitem{Per23}
Michele Pernice.
\newblock The moduli stack of ${A}_r$-stable curves, 2023.

\end{thebibliography}

\end{document}